\newcommand{\modulo}{\operatorname{mod}}
\newcommand{\euler}{\epsilon}
\newcommand{\rfac}[2]{{\left({#1}\right)_{#2}}}
\newcommand{\pqrfac}[3]{{\left({#1};#3\right)_{#2}}}
\newcommand{\A}{{ \boldsymbol A}}
\DeclareMathOperator{\fib}{Fib}
\newtheorem{Theorem}{Theorem}[section]
\newtheorem{Proposition}[Theorem]{Proposition}
\newtheorem{Lemma}[Theorem]{Lemma}
\newtheorem{Corollary}[Theorem]{Corollary} 
\theoremstyle{definition}
\newtheorem{Example}{Example}[section]
\newtheorem{Examples}{Examples}[section]
\theoremstyle{remark}
\newtheorem*{Remarks}{Remarks}
\newtheorem*{Remark}{Remark}
\numberwithin{equation}{section}
\begin{document}

\title{The Partition-Frequency Enumeration Matrix}

\author[H.~S.~Bal]{Hartosh Singh Bal
}
\address{The Caravan\\
Jhandewalan Extn., New Delhi 110001, India}
\email{hartoshbal@gmail.com}

\author[G.~Bhatnagar]{Gaurav Bhatnagar
}
\address{Ashoka University, Sonipat, Haryana, India}
\email{bhatnagarg@gmail.com}
\urladdr{https://www.gbhatnagar.com}

\date{\today}

\keywords{Integer partitions, recurrence relations, divisor functions, sums of triangular numbers, sums of squares, zeta function at even integers}
\subjclass[2010]{Primary: 11P81; Secondary: 11P83, 11A25, 05A17}

\begin{abstract}
We develop a calculus that gives an elementary approach to enumerate partition-like objects using an infinite 
number-theoretic matrix. We call this matrix the partition-frequency enumeration (PFE) matrix. This matrix unifies a large number of 
results connecting number-theoretic functions to partition-theoretic functions. The calculus is extended to arbitrary generating functions, and functions with Weierstrass products. As a by-product, we recover (and extend)
some well-known recurrence relations for many number-theoretic functions, including the sum of divisors function, Ramanujan's $\tau$ function, sums of squares and triangular numbers, and for $\zeta(2n)$, where $n$ is a positive integer. These include classical results due to Euler, Ewell, Ramanujan, Lehmer and others.  As one application, 
we 
embed Ramanujan's famous congruences $p(5n+4)\equiv 0\; (\modulo 5)$
and $\tau(5n+5)\equiv 0\; (\modulo 5)$
into an infinite family of such congruences. 
\end{abstract}

\maketitle

\section{Introduction}
 Let $\sigma_1(n)$ be the sum of divisors of $n$. Euler~\cite{LE1760-243} showed that
\begin{equation}\label{euler-sigma}
\sigma_1(n) = \sigma_1(n-1)+\sigma_1(n-2)-\sigma_1(n-5)-\sigma_1(n-7)+\sigma_1(n-12)
+\cdots.
\end{equation} 
There are two striking features of this result. One, this is a recurrence relation for $\sigma_1(n)$,  a multiplicative arithmetic function. And two, the numbers appearing in Euler's recurrence: $1$, $2$, $5$, $7$, $\dots$, are the generalized pentagonal numbers which are related to the partition 
function $p(n)$---the number of ways of writing a positive integer as an unordered sum of positive integers---which is part of additive number theory. The numbers are quite far apart, which makes it convenient to compute $\sigma_1(n)$ for small values of $n$. These numbers feature in Euler's pentagonal number theorem, which is the expansion
$$(1-q)(1-q^2)(1-q^3)\cdots = 1-q-q^2+q^5+q^7-q^{12}-q^{15}+\cdots.$$
As is well-known, the reciprocal of the product appearing in Euler's pentagonal number theorem is a generating function of integer partitions. 

Such recurrences have been found for many number-theoretic functions, including Ramanujan's $\tau$ function and $r_k$, the number of ways of writing a number $n$ as an ordered sum of $k$ squares; in addition, there are many recurrence relations that contain a partition-theoretic function. Aside from Euler, such results have been found by Ramanujan, Lehmer, Ewell; and many are found in number theory texts without attribution.

In this paper, we
 study a number-theoretic matrix that seems to be at the heart of the connection between arithmetic functions and partition functions, and such recurrence relations. This matrix is closely related to Redheffer's matrix (see Vaughan~\cite{Vaughan1993}), but the use we make of it is quite different. 

Consider the matrix 
\begin{equation}\label{partition-matrix}
A=
\begin{pmatrix}
1 & 1 &  1 & 1 & 1 & 1& \hdotsfor 1\\
0 & 1 & 0 & 1  & 0 & 1 &\hdotsfor1 \\
0 & 0 & 1 & 0  & 0 & 1 &  \hdots    \\ 
0 & 0 & 0 & 1  & 0 & \hdotsfor 2 \\ 
\vdots & \vdots  & \vdots & \vdots &\vdots & \vdots &\vdots\\
\end{pmatrix},
\end{equation}
with the $(i,j)$th entry given by $1$ when $i \mid j$, and $0$ otherwise. Redheffer's matrix has all
$1$'s in the first column; the rest of the entries are the same.

Let $A_n$ be the $n\times n $ sub-matrix consisting of the entries from the first $n$ rows and columns of $A$. For $n=1, 2, \dots$, in turn, we generate the vectors
$P_n=(P(n-1),P(n-2),\dots, P(0))$ and $F(n) = (F_1(n),F_2(n),\dots, F_n(n))$ by means of the two equations
\begin{subequations}
\begin{align}
A_n P_n^T &= F(n)^T \label{part-a}\\
\sum_{k=1}^n kF_k(n) &= nP(n) \label{part-b}
\end{align}
\end{subequations}
together with the initial condition
$P(0)=1$. (Here the superscript $T$ is used to denote transpose.)
The first few terms of the sequence $P(n)$  generated in this fashion are
$$
1, 1, 2, 3, 5, 7, 11, 15, 22, 30, 42, 56, 77, 101, 135, 176, 231, 297, 385, 490, 627,\dots.
$$
This is the sequence $p(n)$ of integer partitions.  (See Example~\ref{ex:partitions} for
more details.)


The objective of this paper is to 
utilize this elementary observation to find results that
connect the arithmetic functions of number theory with partition-theoretic functions. We develop a calculus to determine the matrix associated to all functions that can be represented as infinite products or series. As a by-product, we recover many classical identities and recurrence relations, and find analogues  for other interesting functions, and some natural generalizations. 
See Ono, Schneider and Wagner~\cite{OSW2017, OSW2020} and the references cited therein,
and Merca~\cite{Merca2018a, Merca2021a, Merca2022a}, 
for different, possibly complementary, connections between arithmetic and partition-theoretic functions.

Before describing our results, as motivation, we prove our assertion that the sequence obtained from \eqref{part-a} and \eqref{part-b} 
is in fact $p(n)$, the partition function. 

We find it convenient to 
use the symbols $(u_1, u_2, \dots)$ to represent partitions. The symbol $u_k$ represents $k$, and  $j u_k $ will represent $k+k+\cdots+ k$ ($j$ times). For example, $3u_1+u_2 $ represents the partition
$2+1+1+1$ of $5$. 
If we insert a $1$ by adding $u_1$, we obtain a partition $4u_1+u_2$ of $6$ and if we add $u_2$, then we obtain a partition $3u_1+2u_2$ of $3+4=7$. Thus we can define a recursive approach to list all the partitions by simply adding symbols. Observe that the symbol $u_n$ will first appear when listing partitions of $n$. 
Thus we represent partitions by 
$$\lambda=\sum_k f_k u_k,$$
where $f_k\equiv f_k(\lambda)$ are non-negative integers. 

This way of representing partitions is not standard; however it is useful when we wish to list all the partitions of $n$, upto a particular value of $n$. For example, in this notation the partitions of 
$2$ are: $2u_1$ and $u_2$, and the partitions of $3$ are: 
$3u_1, u_1+u_2, u_3$. To obtain partitions of $4$, we add $u_1$ to all the partitions of $3$ to 
obtain: $4u_1, 2u_1+u_2, u_1+u_3$. We also need to add $u_2$ to partitions of $2$, which gives an additional partition $2u_2$. (The partition obtained by adding $u_2$ to $2u_1$ is already listed.)
Finally, we add the partition represented by $u_4$. This gives us the five partitions of $4$: 
$4u_1, 2u_1+u_2, u_1+u_3, 2u_2, u_4$. 
This approach can be extended to list colored partitions and overpartitions by considering more symbols. 

The symbols 
$\lambda \vdash n$ and 
$|\lambda|=n$ are both used to say that $\lambda$ is a partition of $n$. 
If $|\lambda| = n$, we have
\begin{equation}\label{freq1}
n=\sum_k k f_k.
\end{equation}
The quantity $f_k(\lambda)$ denotes the frequency of $k$ in $\lambda$, that is, the number of times $k$ comes in $\lambda$. 
Let $$F_k(n) = \sum_{\lambda \vdash n } f_k(\lambda)$$
be the number of $k$'s appearing over all the partitions of $n$. 
By summing \eqref{freq1} over all partitions of $n$, we have
$$\sum_{k=1}^n kF_k(n) = n p(n).$$
This shows that $P(n)=p(n)$ satisfies \eqref{part-b}. 

To obtain \eqref{part-a}, observe that for $k=1, 2, \dots, n$,
\begin{equation}\label{freq-recursion}
F_k(n)=p(n-k)+F_k(n-k),
\end{equation}
because adding $u_k$ to each partition of $n-k$ yields a partition of $n$, and vice-versa, on deletion of $u_k$ from any partition containing $k$ as a part, we obtain a partition of $n-k$.  
This gives, on iteration,
$$F_k(n)=p(n-k)+p(n-2k)+p(n-3k)+\cdots,$$
 for $k=1, 2, \dots $. (We take $p(m)=0$ for $m<0$.) 
The matrix equation \eqref{part-a} follows. 

Note that \eqref{part-a} and \eqref{part-b}, together with the initial condition $P(0)=1$ determine the sequence $P(n)$ (as well as $F_k(n)$ for $k=1, 2, \dots, n $).
Since $p(n)$ satisfies the equations and the initial condition, we must have $P(n)=p(n)$. This proves our assertion earlier in the paper.

We call such a matrix the partition-frequency enumeration (PFE) matrix, since it contains all the information required to enumerate both partitions and the associated frequency function. 
 
As an immediate consequence of 
\eqref{part-a} and \eqref{part-b}, we obtain a recurrence relation that appears in Ramanujan's work
(see \cite[p.~108]{Berndt1994}), but has been credited to
Ford~\cite{ford1931}: 
\begin{equation}\label{ramanujan-rec}
\sum_{d=1}^{n} \sigma_1(d) p(n-d)=np(n).
\end{equation}
This follows by multiplying both sides of \eqref{part-a} by the diagonal matrix 
$$\text{diag}(1,2,3,\dots, n),$$ taking column sums (to get each summand on the left hand side of \eqref{ramanujan-rec}), and then, summing  the column sums using \eqref{part-b}, to obtain the right-hand side. 

Ramanujan's recurrence highlights the relevance of the matrix to the connection between arithmetical functions and partitions. There is nothing special about the arithmetic function $\sigma_1(n)$, or indeed, the partition function. This kind of analysis can be done for many different types of functions. 
In this paper, we develop a calculus for writing down matrices corresponding to many such partition-theoretic objects, and illustrate their use in finding number-theoretic results. Further, we extend the ideas to many other functions, as the following sample of results attests.

By essentially the same technique as \eqref{ramanujan-rec}, we obtain the recurrence
\begin{equation}\label{zeta-rec}
\zeta(2n)=  \frac{(-1)^{n+1} n\pi^{2n}}{(2n+1)!}  +\sum_ {k=1}^{n-1}
\frac{(-1)^{k+1} \pi^{2k}}{(2k+1)!} \zeta(2n-2k),
\end{equation}
where $n$ is a positive integer, and $\zeta(2n)$ 
is given by the series
$$\zeta(2n) = \sum_{k=1}^\infty \frac{1}{k^{2n}}.$$
This was discovered by Song~\cite{Song1988} and proved using Fourier series. 
As the reader will see below, our derivation does not involve anything more than what Euler himself used to prove \eqref{zeta-rec}
for $n=1$, that is, his evaluation
$$\zeta(2)=\sum_{k=1}^\infty \frac{1}{k^2} = \frac{\pi^2}{6}.$$

As another example, consider Ramanujan's recurrence for his $\tau$ function. 
Recall the following notation for $q$-shifted factorial (also called the $q$-Pochhammer symbol). 
For $0<|q|<1$,
we use $$\pqrfac{a}{\infty}{q}:= \prod_{k=0}^\infty (1-aq^k),$$
and the short-hand notation: 
$$\pqrfac{a_1, a_2, \dots, a_n}{\infty}{q}:= \pqrfac{a_1}{\infty}{q}\pqrfac{a_2}{\infty}{q}
\cdots \pqrfac{a_n}{\infty}{q}.$$
Ramanujan's $\tau$ function is
defined by the relation
$$\left( q; q\right)^{24}_\infty = \sum_{n=0}^\infty \tau(n+1) q^n.$$
Ramanujan's recurrence for the $\tau$ function~\cite[p.~152]{SR1916} 
\begin{equation}\label{tau-ramanujan}
(n-1)\tau(n)=\sum_{j=1}^\infty (-1)^{j+1}(2j+1)\left(n-1-9j(j+1)/2\right) \tau\big(n-j(j+1)/2\big)
\end{equation}
 looks very much like Euler's recurrence \eqref{euler-sigma}; our proofs of the two results are quite similar, too.

This paper is organized as follows. In \S\ref{sec:PFE} we give the partition-frequency enumeration (PFE) matrix for several ``partition-type objects''.  In \S\ref{sec:arithmetical-examples}, we show that the form of the matrix obtained is responsible for many relations between arithmetical functions and partition-theoretic objects.  In \S\ref{sec:calculus} we extend our calculus to find the PFE matrix for arbitrary power series and products. This extends the applicability of the techniques of this paper, and we find many results analogous to those mentioned above. 

We consider three different applications to illustrate the importance of such results. First, in \S\ref{sec:applications1}, we consider a problem of Heninger, Rains and Sloane~\cite{HRS2006} that arose in the context of taking $n$th roots of theta functions, and find related results. Next, in 
\S\ref{sec:application2}, we present some applications of a formula which Gould~\cite{Gould1974} says  ``is not as widely knows as it should be, and has been rediscovered repeatedly''.
As a  third application (in \S\ref{sec:applications3}), we embed
Ramanujan's famous congruences 
\begin{equation*}
p(5m+4)  \equiv 0 \text{ (mod $5$)} \text{ and }
\tau(5m+5) \equiv 0\text{ $($mod $5)$} ,
\end{equation*}
into an infinite family of congruences: 
$$P_r(5m+4) \equiv 0 \; (\modulo 5), \text{ if } r \equiv 1\; (\modulo 5),$$
where $P_r(n)$ are the coefficients of the power series expansion of $1/(q;q)_\infty^r$.
Here $r$ is a rational number, and a few special cases of our results overlap with congruences found by Chan and 
Wang~\cite{CW2019}. Several families of congruences of this nature appear in \S\ref{sec:applications3}.

\section{Enumeration of Partitions}\label{sec:PFE}
In this section, we find the partition-frequency enumeration (PFE) matrix for all
 ``partition-type" sequences with generating functions of the form
\begin{equation}\label{gf-gen-partition}
Q(z,q)=\prod_{k=1}^\infty\frac{1}{\left(1-zq^k\right)^{b_k}}
=\sum_{n=0}^\infty P(z, n)q^n .
\end{equation}
Here $(b_k)$ is any sequence of complex numbers 
and $|q|<1$, $|z|<1$. 
For convenience, we suppress the dependency on $z$, by writing $Q(q)$ for $Q(z,q)$, $P(n)=P(z,n)$ and so on.  
We refer the reader to \cite{Andrews1976, AAR1999, AE2004} for background  information on the theory of partitions.


We first write the system of equations in more generality.
\begin{Lemma}[An enumeration lemma] \label{lemma:enumeration} Let $\A$ be an infinite matrix, and $A_n$ be the sub-matrix formed by taking the first $n$ columns of $\A$. Suppose that $A_n$ has a finite number (say, $m_n$) of non-zero rows.  Let $U_k$ and $V_k$ be some sequences of complex numbers, with $V_k\neq 0$ for all $k$.  
Let $P(n)$, for $n=0, 1, 2, \dots$, and $F_1(n), F_2(n), \dots, F_{m_n}(n)$, be related as follows: 
\begin{subequations}
\begin{equation} \label{enumeration1-a}
A_n
\begin{pmatrix}
P(n-1)\\
P(n-2)\\
\vdots\\
P(0)
\end{pmatrix}
 = 
 \begin{pmatrix}
F_1(n)\\
F_2(n)\\
\vdots\\
F_{m_n}(n)
\end{pmatrix},
\end{equation}
and
\begin{equation} \label{enumeration1-b}
\sum_{k=1}^{m_n} U_kF_k(n) = V_nP(n).
\end{equation}
\end{subequations}
Then, given the initial condition $P(0)$, 
the above equations determine $P(n)$, and $F_k(n)$, for $k=1, 2, \dots, n$. 
\end{Lemma}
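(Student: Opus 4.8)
The plan is to prove the statement by induction on $n$, showing at each stage that the two relations \eqref{enumeration1-a} and \eqref{enumeration1-b} determine the new quantities uniquely from data already computed. The key structural observation is that \eqref{enumeration1-a} expresses the frequency values $F_k(n)$ purely in terms of $P(n-1), P(n-2), \dots, P(0)$, since $A_n$ consists of the first $n$ columns of $\A$ and is paired against the vector $\bigl(P(n-1), \dots, P(0)\bigr)^t$. Thus \eqref{enumeration1-a} never refers to $P(n)$ itself, and the recursion is causal: it reads only strictly earlier values of $P$.

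For the base case, $P(0)$ is supplied as the initial condition. For the inductive step, I would assume that $P(0), P(1), \dots, P(n-1)$ have all been determined. Because $A_n$ has only $m_n$ non-zero rows, the right-hand side of \eqref{enumeration1-a} is a genuine finite matrix--vector product of the known matrix $A_n$ with the known vector $\bigl(P(n-1), \dots, P(0)\bigr)^t$; evaluating it determines $F_1(n), F_2(n), \dots, F_{m_n}(n)$ explicitly and uniquely, and in particular pins down each $F_k(n)$ named in the statement.

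Having the values $F_k(n)$ in hand, I would substitute them into \eqref{enumeration1-b}. The left-hand side $\sum_{k=1}^{m_n} U_kF_k(n)$ is now a known finite sum, and since $V_n\neq 0$ we may solve
$$P(n) = \frac{1}{V_n}\sum_{k=1}^{m_n} U_kF_k(n),$$
which determines $P(n)$ uniquely. This closes the induction and proves that $P(0)$ together with \eqref{enumeration1-a} and \eqref{enumeration1-b} pins down the entire sequence.

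There is no genuine obstacle here; the lemma is essentially the formalization of a bookkeeping observation, and the only two places where care is needed are exactly the two hypotheses built into the statement. The finiteness of $m_n$ is what guarantees that the matrix product in \eqref{enumeration1-a} and the sum in \eqref{enumeration1-b} are well-defined finite objects, while the condition $V_n\neq 0$ is precisely what permits the final division that isolates $P(n)$. The conceptual heart of the argument, which I would want to state explicitly, is that \eqref{enumeration1-a} feeds forward (producing the new frequencies from old values of $P$) whereas \eqref{enumeration1-b} closes the loop (producing the new $P$ from the new frequencies), so that no circular dependence ever arises.
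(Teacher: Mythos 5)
Your proposal is correct and follows essentially the same route as the paper's own (very brief) proof: the paper likewise observes that \eqref{enumeration1-a} yields $F_1(n),\dots,F_{m_n}(n)$ from $P(0),\dots,P(n-1)$, and then \eqref{enumeration1-b} yields $P(n)$, with your write-up merely making the induction, the causality remark, and the role of $V_n\neq 0$ explicit.
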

\begin{proof} The proof is immediate. 
If we know $P(0)$, $P(1)$, $\dots$, $P(n-1)$, we obtain 
$F_1(n)$, $F_2(n)$, $\dots$, $F_{m_n}(n)$ from \eqref{enumeration1-a}; then, we obtain $P(n)$ from \eqref{enumeration1-b}.
\end{proof}

\begin{Example}[Enumeration of partitions]\label{ex:partitions} We calculate the first few values of $p(n)$ by this approach, using \eqref{part-a} and \eqref{part-b}. First, for $n=1$, we see that
$$(1)\left(p(0)\right)=(1)=(F_1(1)),$$ so  $P(1) = 1$. 
Next, for $n=2$, \eqref{part-a} gives
$$
\begin{pmatrix}
1 & 1 \\
0 & 1
\end{pmatrix}
\begin{pmatrix}
p(1)\\
p(0)
\end{pmatrix}
=
\begin{pmatrix}
1 & 1 \\
0 & 1
\end{pmatrix}
\begin{pmatrix}
1\\
1
\end{pmatrix}
=
\begin{pmatrix}
2\\
1
\end{pmatrix}
=
\begin{pmatrix}
F_1(2)\\
F_2(2)\\
\end{pmatrix}.
$$
This gives $F_1(2)=2$ and $F_2(2)=1$. Thus 
$2P(2) = 1\cdot 2+2\cdot 1$, or  $P(2)=2$.  
For $n=3$, we have
$$
\begin{pmatrix}
1 & 1 &1 \\
0 & 1 & 0\\
0 & 0 &1 \\
\end{pmatrix}
\begin{pmatrix}
2\\
1\\
1
\end{pmatrix}
=
\begin{pmatrix}
4\\
1\\
1
\end{pmatrix}
=
\begin{pmatrix}
F_1(3)\\
F_2(3)\\
F_3(3)
\end{pmatrix}.
$$
From \eqref{part-b}, we have $3p(3)=1\cdot 4+2\cdot 1+3\cdot 1=9$, so $p(3)=3$.
\end{Example}
We wish to emphasize, that given the matrix $\A$, and the initial condition $p(0)=1$, we can generate $p(n)$ for $n>0$ and $F_k(n)$ satisfying \eqref{part-b}. As will become apparent, the enumeration of many partition-theoretic functions can achieved similarly, using a matrix that is not very far from \eqref{partition-matrix}.

In most of our work, we will consider the weights of Lemma~\ref{lemma:enumeration} given by $U_n=V_n=n$ and $A_n$ 
as $n\times n$ sub-matrices of an infinite matrix $\A$.  The matrix equations are of the form
\begin{subequations}
\begin{align} \label{gen-a}
A_n
\begin{pmatrix}
P(n-1)\\
P(n-2)\\
\vdots\\
P(0)
\end{pmatrix}
 &= 
 \begin{pmatrix}
F_1(n)\\
F_2(n)\\
\vdots\\
F_n(n)
\end{pmatrix},
\intertext{or, in weighted form,} 
A_n^{\prime}
\begin{pmatrix}
P(n-1)\\
P(n-2)\\
\vdots\\
P(0)
\end{pmatrix}
 &= 
 \begin{pmatrix}
F_1(n)\\
2F_2(n)\\
\vdots\\
n F_n(n).
\end{pmatrix}.
\label{gen-weighted}
\end{align}
\end{subequations}
The entries of the matrix $A_n$ are denoted $a_i(j)$ for $i, j = 1, 2, \dots, n$. The corresponding entries of 
$A_n^\prime$ are then $ia_i(j)$.   

We require notations for the generating functions of the rows of the matrix
$\A$ and the generalized frequency function. These are, respectively,
$$R_k(q) :=\sum_{j} a_k(j)q^j,
\; \text{ and } 
N_k(q) :=\sum_{n\ge 0} F_k(n)q^n.
$$

Given a PFE matrix $\A$, we refer to the corresponding sequences as the generalized partition (respectively, frequency) functions, even if such a combinatorial interpretation does not exist. 

Recall the notation for the rising factorial:
$$\rfac{a}{0}=1,\;\; \rfac{a}{r} = a(a+1)\cdots (a+r-1) \text{ for } r > 0.$$
We also require the binomial theorem \cite[Eq.\ (2.1.6)]{AAR1999}: for $|z|<1$
$$\left(1-z\right)^{-a} =\sum_{r=0}^\infty \frac{\rfac{a}{r}}{r!} z^r.$$
%
In view of the binomial theorem, we see that the generalized partition function is given by $P(0)=1$, and 
\begin{equation}\label{gen-partition-function}
P(n) = \sum_{\lambda=\sum r_i{u_i}\atop |\lambda|=n} \prod_{i}  \frac{\rfac{b_i}{r_i}}{r_i!} z^{r_i}, 
\end{equation}
where the sum is over all partitions of $n$. Note that $\sum_i r_i$ is the number of parts of the partition $\lambda$, so the power of $z$ keeps track of the number of parts. 
We define the generalized frequency function $F_k(n)\equiv F_k(z,n)$ as follows:
\begin{equation}\label{gen-freq-function}
F_k(n):= \sum_{\lambda=\sum {r_iu_i}\atop |\lambda|=n , r_k>0}  r_k \prod_{i}  \frac{\rfac{b_i}{r_i}}{r_i!} z^{r_i}. 
\end{equation}
The sum in the definition of $F_k(n)$ is over all partitions $\lambda$ of $n$ which contain $k$ as a part. We now compute the associated PFE matrix. The following theorem gives us all the elements required by Lemma~\ref{lemma:enumeration}.

\begin{Theorem}\label{th:gen-freq-matrix} Let $|z|<1$, $|q|<1$, and $(b_k)_{k\ge 1}$ be a sequence of complex numbers. Let $Q(q)$ be the generating function \eqref{gf-gen-partition}, and let $P(n)$ and $F_k(n)$ the associated functions given by \eqref{gen-partition-function} and \eqref{gen-freq-function}.
  Then:
 \begin{enumerate}[(a)]
\item  $P(0)=1$.
 \item
The matrix equation \eqref{gen-a}  holds, with entries of $A_n$ given by
\begin{subequations}
\begin{equation}
a_i(j) = 
\begin{cases}
b_iz^r & \text{if } j=ri ,\\
0 & \text{otherwise}.\\
\end{cases} 
\label{partitions-aij}
\end{equation}
Equivalently, the matrix equation \eqref{gen-weighted}  holds, with entries of $A_n^\prime$ given by
\begin{equation}
a^\prime_i(j) = 
\begin{cases}
ib_iz^r & \text{if } j=ri ,\\
0 & \text{otherwise}.\\
\end{cases} 
\label{partitions-aij-weighted}
\end{equation}
\end{subequations}
\item For $n=1, 2, 3, \dots$, we have
\begin{equation}\label{gen-column-sums}
\sum_{k=1}^n kF_k(n) = nP(n) .
\end{equation}
\end{enumerate}
\end{Theorem}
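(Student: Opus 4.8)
The plan is to route all three parts through the generating function \eqref{gf-gen-partition}, since the combinatorial definitions \eqref{gen-partition-function} and \eqref{gen-freq-function} are exactly coefficient extractions from suitably marked products. Part (a) is immediate: the only partition of $0$ is the empty partition, whose weight is the empty product $1$, so $P(0)=1$ (equivalently, set $q=0$ in $Q(q)$). As a preliminary I would record that expanding each factor of \eqref{gf-gen-partition} by the binomial theorem, $(1-zq^i)^{-b_i}=\sum_{r\ge0}\frac{(b_i)_r}{r!}z^rq^{ir}$, and multiplying over $i$ shows $P(n)$ in \eqref{gen-partition-function} is precisely the coefficient of $q^n$ in $Q(q)$; absolute convergence for $|z|,|q|<1$ justifies the rearrangement.

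The heart of the argument is part (b), and my approach would be to compute the row generating function $N_k(q)=\sum_n F_k(n)q^n$ in closed form. Writing $W(\pi)=\prod_i\frac{(b_i)_{r_i}}{r_i!}z^{r_i}$ and $q^{|\pi|}=\prod_iq^{ir_i}$, the sum defining $N_k$ factors over the parts: every index $i\ne k$ contributes $\sum_{r_i\ge0}\frac{(b_i)_{r_i}}{r_i!}(zq^i)^{r_i}=(1-zq^i)^{-b_i}$, while the index $i=k$ contributes the marked factor $\sum_{r\ge1}r\frac{(b_k)_r}{r!}(zq^k)^r$. The key step, and the one I expect to be the main obstacle, is evaluating this marked factor: using $r\frac{(b)_r}{r!}=b\frac{(b+1)_{r-1}}{(r-1)!}$ and reindexing, it collapses to $b_kzq^k(1-zq^k)^{-b_k-1}$. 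Combining the factors and recognizing $\prod_{i\ne k}(1-zq^i)^{-b_i}=(1-zq^k)^{b_k}Q(q)$ gives the clean formula
\[
N_k(q)=\frac{b_k z q^k}{1-zq^k}\,Q(q).
\]
Expanding $\frac{1}{1-zq^k}=\sum_{r\ge0}z^rq^{kr}$ and $Q(q)=\sum_mP(m)q^m$ and reading off the coefficient of $q^n$ then yields $F_k(n)=b_k\sum_{r\ge1}z^rP(n-kr)$, which is exactly the matrix equation \eqref{gen-a} with the entries $a_i(j)=b_iz^r$ for $j=ri$ claimed in \eqref{partitions-aij}; the weighted form \eqref{partitions-aij-weighted} follows by scaling row $i$ by $i$.

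Part (c) I would prove combinatorially, mirroring the derivation of \eqref{part-b} from \eqref{freq1}. Interchanging the order of summation and using $\sum_k kr_k=|\pi|=n$ for every partition $\pi$ of $n$ gives
\[
\sum_{k=1}^n kF_k(n)=\sum_{\pi\vdash n}W(\pi)\sum_{k}kr_k=\sum_{\pi\vdash n}W(\pi)\cdot n=nP(n).
\]
The same relation also falls out of part (b) by logarithmic differentiation, since $\sum_k kN_k(q)=Q(q)\sum_k\frac{kb_kzq^k}{1-zq^k}=Q(q)\cdot q\frac{d}{dq}\log Q(q)=qQ'(q)=\sum_n nP(n)q^n$. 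Throughout, the only analytic input needed is absolute convergence for $|z|,|q|<1$, which justifies all the factorizations, reindexings, and term-by-term coefficient extractions.
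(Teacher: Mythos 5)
Your proof is correct, and it shares the paper's overall architecture: everything funnels through the closed form $N_k(q)=\frac{b_k z q^k}{1-zq^k}Q(q)$ (the paper's \eqref{freq-gen-gf}), the matrix entries \eqref{partitions-aij} then fall out by expanding the geometric factor, and your argument for part (c) --- interchange the two sums and use $\sum_i i r_i = n$ --- is exactly the paper's. Where you genuinely differ is the derivation of that closed form. The paper works coefficientwise: it first proves the recurrence \eqref{recurrence-gen}, namely $F_k(n)=zF_k(n-k)+b_k z P(n-k)$, by splitting $r_k\rfac{b_k}{r_k}/r_k! = (r_k-1)\rfac{b_k}{r_k-1}/(r_k-1)! + b_k\rfac{b_k}{r_k-1}/(r_k-1)!$ inside the sum \eqref{gen-freq-function}, and only then sums against $q^n$ to obtain $N_k(q)$. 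You instead factor $N_k(q)$ over the parts and evaluate the marked factor in closed form via $r\rfac{b}{r}/r! = b\rfac{b+1}{r-1}/(r-1)!$, getting $b_k z q^k(1-zq^k)^{-b_k-1}$, which recombines with $\prod_{i\ne k}(1-zq^i)^{-b_i}=(1-zq^k)^{b_k}Q(q)$. Both routes rest on an elementary rising-factorial identity, but they buy different things: your factorization is shorter and more self-contained, at the cost of interchanging the sum over partitions with an infinite product (harmless for $|z|,|q|<1$, and also coefficientwise since the $i$th factor is $1+O(q^i)$), whereas the paper's route uses only finite sums at each coefficient and, more importantly, produces \eqref{recurrence-gen} as a by-product --- a relation of independent interest that the paper reuses in its examples (e.g.\ $F_k(n)=P(n-k)-F_k(n-k)$ for partitions with distinct parts) and that your proof bypasses. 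Your closing observation that part (c) also follows from part (b) by logarithmic differentiation is correct and is a nice consistency check, though the paper does not take that route.
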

\begin{Remark} This work arose in the context of \cite{BB2020}; we wanted a convenient approach to  enumerate objects considered in that paper. 
\end{Remark}
\begin{proof} 
Clearly, from \eqref{gf-gen-partition}, we have $P(0)=1$. 

For part (b), we first prove a generalization of \eqref{freq-recursion}:
for $k=1, 2, \dots$,
\begin{equation}\label{recurrence-gen}
F_k(n)= zF_k(n-k)+b_k z P(n-k).
\end{equation}
 To show this, 
we use the elementary identity
$$r_k \frac{\rfac{b_k}{r_k}}{r_k!} = 
(r_k-1)\frac{\rfac{b_k}{r_k-1}}{(r_k-1)!}+b_k \frac{\rfac{b_k}{r_k-1}}{(r_k-1)!}
$$
for $r_k>0$. This gives
\begin{align*}
F_k(n) & = 
\sum_{\lambda=\sum {r_iu_i}\atop |\lambda|=n , r_k>0} 
  r_k  \frac{\rfac{b_k}{r_k}}{r_k!} z^{r_k} \prod_{i\neq k}  \frac{\rfac{b_i}{r_i}}{r_i!} z^{r_i} 
\\
& = 
z \sum_{\lambda=\sum {r_iu_i}\atop |\lambda|=n , r_k>0} (r_k-1)
 \frac{\rfac{b_k}{r_k-1}}{(r_k-1)!} z^{r_k-1} \prod_{i\neq k}  \frac{\rfac{b_i}{r_i}}{r_i!} z^{r_i} \\
&\hspace{1 in}+
 b_k z \sum_{\lambda=\sum {r_iu_i}\atop |\lambda|=n , r_k>0} 
 \frac{\rfac{b_k}{r_k-1}}{(r_k-1)!} z^{r_k-1} \prod_{i\neq k}  \frac{\rfac{b_i}{r_i}}{r_i!} z^{r_i} 
\\
& =
zF_k(n-k) + b_kz P(n-k).
\end{align*}
This proves \eqref{recurrence-gen}. 
Next, multiply both sides of \eqref{recurrence-gen} by $q^k$ and sum over $k$ to obtain
$$N_k(q)=zq^k N_k(q)+b_kzq^kQ(q),$$
that is,
\begin{equation}\label{freq-gen-gf}
N_k(q)=\frac{b_kzq^k}{1-zq^k} Q(q).
\end{equation}
This immediately gives an expression for the generating function of the rows of the associated PFE matrix $\A$:
\begin{equation}
R_k(q) = \frac{b_kzq^k}{1-zq^k},\label{partitions-rowgf}
 \end{equation}
Thus, for  $n\geq 1$, we have the equations \eqref{gen-a},  where the entries \eqref{partitions-aij} for the matrix are obtained by expanding the denominator of \eqref{partitions-rowgf} as a geometric series, and comparing coefficients. To obtain the entries \eqref{partitions-aij-weighted} of the equivalent form \eqref{gen-weighted}, we multiply both sides of \eqref{gen-a} by the vector $(1, 2, \dots, n)$.

Finally, \eqref{gen-column-sums} follows by multiplying \eqref{gen-freq-function} by $k$, and summing over $k$. The sum can be interchanged for each partition $\lambda=\sum_i r_i u_i$ of $n$ and we use 
$$\sum_i i r_i = n$$ to obtain the result.
\end{proof}

\begin{Examples} A few examples of partition functions are given below. They can all be enumerated using the approach of Example~\ref{ex:partitions}.
\begin{enumerate}
\item {\bf Partitions with distinct parts}: 
Partitions with distinct parts are generated by $\pqrfac{-q}{\infty}{q}$; that is, when $z=-1$ 
and $b_k=-1$ for all $k$ in \eqref{gf-gen-partition}. Equation~\eqref{recurrence-gen}
reduces to 
$$F_k(n)=P(n-k)-F_k(n-k),$$
where 
 $P(n)=p(n\; |\text{ distinct parts})$ in this context. Here we have used the self-explanatory
notation  from  \cite{AE2004} for partitions with distinct parts. 
The first row of the PFE matrix is
$$[1, -1, 1, -1, \dots ].$$
The remaining rows can be obtained from \eqref{partition-matrix} by changing signs of every-other  
non-zero entry too. 
\item {\bf Partitions with odd parts}:
The partitions with only odd parts are generated by $z=1$, $b_{2k}=0$ and $b_{2k-1}=1$ for $k>0$ in \eqref{gf-gen-partition}. The matrix obtained is the same as one obtained from \eqref{partition-matrix} by changing all entries in even-numbered rows to $0$. The corresponding frequencies $F_{2k}(n)$ all equal $0$ for all $n$.
\item {\bf Parts in $S$}: If $S$ is a subset of the natural numbers, and $P(n)$ the partitions with parts in $S$, the PFE matrix has entries
\begin{equation*}
a_i(j) = 
\begin{cases}
1 & \text{if } i\in S \text{ and } j=ri ,\\
0 & \text{otherwise}.\\
\end{cases} 
\end{equation*}
\item {\bf Colored partitions}:
Colored partitions, with part $k$ coming in $b_k$ colors, are given by the generating function \eqref{gf-gen-partition} with $z=1$ and
 $b_k\in \mathbb{N}$. 
 They are called ``prefabs" by Wilf; see \cite[\S 3.14]{wilf2006} for many examples and another intuitive interpretation of the corresponding partitions. In this case, \eqref{recurrence-gen} has a combinatorial meaning; see \cite{BB2021b}.
%
\item {\bf Plane partitions}: Plane partitions can be considered as colored partitions where the part $k$ comes in $k$ colors (see \cite[Th.\ 15, p.~101]{AE2004}), that is, when $b_k=k$ for all $k$ in \eqref{gf-gen-partition}.  
\end{enumerate}
\end{Examples}

The next result gives the PFE matrix for a product of two generating functions 
of the form \eqref{gf-gen-partition}. 
\begin{Theorem}\label{th:product-of-products}
Consider the generating function
$$\prod_{k=1}^\infty\frac{1}{\left(1-zq^k\right)^{b_k}}
\prod_{k=1}^\infty\frac{1}{\left(1-tq^k\right)^{c_k}}
=\sum_{k=1}^\infty P(n)q^n.$$
We can find the PFE matrix in the following two forms.
\begin{enumerate}[(a.)]
\item $A^{(1)}_n$ is an $n\times n $ matrix with entries given by
\begin{equation}\label{product-form1}
a_i(j) = 
\begin{cases}
b_iz^r+c_it^r & \text{if } j=ri ,\\
0 & \text{otherwise}.\\
\end{cases} 
\end{equation}
\item $A^{(2)}_n$ is a $2n\times n$ matrix with entries given by
\begin{equation}\label{product-form2}
a_i(j) = 
\begin{cases}
b_iz^r & \text{if } i=2s-1, j=rs \text{ for } s=1, 2, \dots, n,\\
c_it^r & \text{if } i=2s, j=rs \text{ for } s=1, 2, \dots, n,\\
0 & \text{otherwise}.\\
\end{cases} 
\end{equation}
\end{enumerate}
\end{Theorem}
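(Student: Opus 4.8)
The plan is to reduce everything to the single-product case already settled in Theorem~\ref{th:gen-freq-matrix}, exploiting the fact that the logarithmic derivative turns a product into a sum. I would write the generating function as $Q(q)=Q_b(q)Q_c(q)$, where $Q_b(q)=\prod_{k\ge1}(1-zq^k)^{-b_k}$ and $Q_c(q)=\prod_{k\ge1}(1-tq^k)^{-c_k}$. By Theorem~\ref{th:gen-freq-matrix} (see \eqref{partitions-rowgf} and \eqref{freq-gen-gf}) applied to each factor separately, the $k$th row generating function of the PFE matrix of $Q_b$ alone is $b_kzq^k/(1-zq^k)$, with frequency generating function $N_k^{(b)}(q)=\frac{b_kzq^k}{1-zq^k}Q_b(q)$, and likewise $N_k^{(c)}(q)=\frac{c_ktq^k}{1-tq^k}Q_c(q)$ for $Q_c$.

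For part (a.), I would first claim that the combined frequency generating function for part $k$ factors as $N_k(q)=R_k(q)Q(q)$ with
\[
R_k(q)=\frac{b_kzq^k}{1-zq^k}+\frac{c_ktq^k}{1-tq^k}.
\]
The justification is a Leibniz-type rule for frequencies: a term of $Q=Q_bQ_c$ is a pair consisting of a $Q_b$-partition and a $Q_c$-partition, and the number of $k$'s in the pair is the sum of the two individual counts, so $N_k=N_k^{(b)}Q_c+Q_bN_k^{(c)}$; substituting the two expressions above and factoring out $Q=Q_bQ_c$ gives the displayed $R_k(q)$. Expanding each geometric series, the coefficient of $q^{ri}$ in $R_i(q)$ is $b_iz^r+c_it^r$, which is exactly the entry $a_i(j)$ with $j=ri$ in \eqref{product-form1}; reading off these coefficients turns $N_i(q)=R_i(q)Q(q)$ into the matrix equation \eqref{gen-a} for $A^{(1)}_n$. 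The weighted relation \eqref{gen-column-sums} then follows from the multiplicativity of the logarithmic derivative: since $\sum_k kR_k(q)=q\,Q_b'(q)/Q_b(q)+q\,Q_c'(q)/Q_c(q)=q\,Q'(q)/Q(q)$, multiplying by $Q(q)$ gives $\sum_k kN_k(q)=q\,Q'(q)$, and comparing the coefficient of $q^n$ yields $\sum_{k=1}^n kF_k(n)=nP(n)$.

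For part (b.), the only change is that I would refrain from merging the two contributions into a single row, and instead keep the rows of the two single-product matrices and interleave them: place the $s$th row of the $Q_b$-matrix in position $i=2s-1$ and the $s$th row of the $Q_c$-matrix in position $i=2s$, which produces exactly the $2n\times n$ array \eqref{product-form2}. Here I would invoke Lemma~\ref{lemma:enumeration} in its general form, with $m_n=2n$ nonzero rows and with weights $U_{2s-1}=U_{2s}=s$ (rather than $U_k=k$), since a part $s$ contributes weight $s$ to $|\lambda|$ regardless of which factor supplies it. The matrix equation \eqref{enumeration1-a} is verified as before by expanding each row's geometric series, and the weighted relation \eqref{enumeration1-b} again reduces to $\sum_s s\big(R_s^{(b)}(q)+R_s^{(c)}(q)\big)=q\,Q'(q)/Q(q)$, i.e.\ to the same logarithmic-derivative identity, now grouped by $s$.

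I expect the geometric-series bookkeeping and the coefficient comparisons to be entirely mechanical. The one step deserving genuine care is the identity $N_k=N_k^{(b)}Q_c+Q_bN_k^{(c)}$ underlying part (a.): it must either be argued combinatorially (frequencies of a part add across the two factors of the convolution defining $P(n)$) or, more cleanly, simply taken as the definition of the combined frequency function $F_k(n)=[q^n]R_k(q)Q(q)$, after which the substance of the theorem is the single identity $\sum_k kR_k=q\,Q'/Q$. The secondary point requiring attention is the nonstandard weighting $U_{2s-1}=U_{2s}=s$ in part (b.), which is precisely why that statement rests on the general Lemma~\ref{lemma:enumeration} rather than on the specialization $U_k=V_k=k$ used elsewhere.
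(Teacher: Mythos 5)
Your proof is correct, but it takes a somewhat different route from the paper's. The paper defines the two frequency functions $F_k^z(n)$ and $F_k^t(n)$ directly as weighted sums over pairs of partitions $(\pi,\mu)$ with $|\pi|+|\mu|=n$, asserts that they satisfy the recursions $F_k^z(n)=zF_k^z(n-k)+b_kzP(n-k)$ and $F_k^t(n)=tF_k^t(n-k)+c_ktP(n-k)$ (the analogues of \eqref{recurrence-gen}, proved by the same rising-factorial identity but now against the combined coefficient $P$), and then iterates these recursions: keeping them separate yields the interleaved form \eqref{product-form2}, and adding them before iterating yields \eqref{product-form1}. You instead reduce to the already-proven Theorem~\ref{th:gen-freq-matrix} via the factorization $Q=Q_bQ_c$: your Leibniz rule $N_k=N_k^{(b)}Q_c+Q_bN_k^{(c)}$ is equivalent to the paper's recursions (both amount to $N_k^z=R_k^{(b)}Q$ and $N_k^t=R_k^{(c)}Q$ at the level of generating functions), so the two arguments land on the same row identities; the difference is that you import the single-factor result and the convolution structure rather than re-running the coefficient manipulation on pair sums. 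A second genuine difference is your treatment of the weighted relation \eqref{sumfk-prod}: you derive it from the logarithmic-derivative identity $\sum_k kR_k(q)=qQ'(q)/Q(q)$, whereas the paper obtains it by the same direct summation argument as in part (c) of Theorem~\ref{th:gen-freq-matrix} (summing $kr_k$ and $ks_k$ over pairs using $\sum_i ir_i+\sum_j js_j=n$); note the paper explicitly frames its PFE calculus as an \emph{alternative} to logarithmic-derivative techniques, so your step, while perfectly valid (formally, even for complex $b_k$, $c_k$), goes outside the paper's self-contained framework. Finally, your handling of part (b.) --- interleaving the rows and invoking Lemma~\ref{lemma:enumeration} with weights $U_{2s-1}=U_{2s}=s$ --- matches exactly what the paper's remark after the theorem records in \eqref{sumfk-prod}, and your reading of the entries as $b_sz^r$, $c_st^r$ (indexed by the part $s$, not the row index $i$) is the intended one.
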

\begin{proof}
In this case the generalized partition function is given by a sum of the form
$$P(n) = \sum_{(\lambda_1,\lambda_2)} \prod_{i}  \frac{\rfac{b_i}{r_i}}{r_i!} z^{r_i} 
\prod_{j}\frac{\rfac{c_j}{s_j}}{s_j!} t^{s_j}, $$
where the sum is over pairs of partitions $(\lambda_1,\lambda_2)$ with $|\lambda_1|+|\lambda_2|=n$; here $\lambda_1=\sum_i r_iu_i$
and $\lambda_2 =\sum_i s_i v_i$. The parts of $\lambda_1$ are accounted by powers of $z$ and parts of $\lambda_2$ by powers of $t$. The corresponding generalized frequency functions can be defined as in \eqref{gen-freq-function}. They satisfy
\begin{align*}
F_k^z(n) &= zF_k^z(n-k)+b_k z P(n-k),\\
\intertext{and}
F_k^t(n) &= tF_k^t(n-k)+c_k t P(n-k).
\end{align*}
This implies the matrix equation implied by \eqref{product-form2}.
In addition, adding the two equations we obtain
\begin{align*}
F_k^z(n)+F_k^t(n)&=zF_k^z(n)+tF_k^t(n)+(b_k z +c_kt)P(n-k)\\
&=(b_k z +c_kt)P(n-k)+(b_k z^2 +c_kt^2)P(n-2k)+\cdots.
\end{align*}
This implies the entries given in \eqref{product-form1}.
\end{proof}
\begin{Remark}\
Note that the implied matrix equations have two different columns on the right hand side of 
\eqref{gen-a} or \eqref{gen-weighted}. The choice between the two forms \eqref{product-form1} and \eqref{product-form2} is dictated by whether we are interested in computing the frequency functions $F_k^z$ and $F_k^t(n)$ individually. In either case, the result analogous to \eqref{gen-column-sums} is the same. We have
\begin{equation}\label{sumfk-prod}
\sum_{k=1}^n k\left(F_k^z(n)+F_k^t(n)\right) = nP(n) .
\end{equation}
\end{Remark}

As a prototypical example, we consider the overpartitions introduced by Corteel and Lovejoy \cite{CL2004}.  
\begin{Example}[Overpartitions] An overpartition of $n$ is a non-increasing sequence of natural numbers whose sum is $n$, where the first occurrence of a  number can be overlined. We denote the number of overpartitions of $n$ by
$\overline{p}(n)$. 
The 
generating function of overpartitions is the product of the respective generating functions: 
$$\overline{Q}(q)=
\sum_{n\geq 0} \overline{p}(n)q^n  
=
\frac{\pqrfac{-q}{\infty}q}{\pqrfac{q}{\infty}q}.$$
We obtain the PFE matrix from \eqref{product-form2} by taking $z=-1$, $b_j=-1$, $t=1$ and $c_j=1$. The matrix consists of rows of the PFE matrix corresponding to distinct partitions alternating with the PFE matrix for ordinary partitions. 
\end{Example}

We have seen that the enumeration of many types of partitions can be accomplished using PFE matrices very similar to \eqref{partition-matrix}. As a bonus, we obtain information about the frequency function associated with partitions. Next, we explore the connection to arithmetic functions; this is an immediate consequence of the form of the PFE matrix. 

\section{Connection with arithmetic functions}\label{sec:arithmetical-examples}
The objective of this section is to illustrate the connection between the arithmetic functions of 
number theory and partition-theoretic functions. The connection is due to the following proposition,
which applies whenever we have a PFE matrix with entries a mild variation of the entries of the matrix \eqref{partition-matrix}. 

\begin{Proposition}\label{prop:divisor-sums}
Let $P(n)$ and $F_k(n)$ satisfy $P(0)=1$, 
the matrix equations \eqref{gen-a} with entries $a_i(j)$ given by \eqref{partitions-aij} (or equivalently,
\eqref{gen-weighted} with entries $ia_i(j)$), and \eqref{gen-column-sums}.
Let $f:\mathbb{N}\to \mathbb{C}$ and $g(n)$ be defined as
\begin{equation}\label{gn}
g(n):=\sum_{d\mid n} b_d f(d) z^{n/d}.
\end{equation}
Then we have the following identities:
\begin{subequations}
\begin{align}
\sum_{k=1}^n g(k) P(n-k) &= \sum_{k=1}^n f(k)F_k(n); \label{divisor-sum}\\
Q(q)\sum_{k=1}^\infty {g(k)} q^k &=\sum_{n=0}^\infty \Big(\sum_{k=1}^n f(k)F_k(n)\Big) q^n.\label{divisor-sum-gf1} 
\end{align}
\end{subequations}
\end{Proposition}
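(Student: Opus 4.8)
The plan is to establish the generating-function identity \eqref{divisor-sum-gf1} first, and then deduce \eqref{divisor-sum} from it by comparing coefficients of $q^n$. The only input I need from Theorem~\ref{th:gen-freq-matrix} is the closed form \eqref{freq-gen-gf} for the row generating function, namely $N_k(q)=\frac{b_kzq^k}{1-zq^k}\,Q(q)$; everything else is a rearrangement of absolutely convergent series (valid since $|z|<1$ and $|q|<1$).

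First I would rewrite the right-hand side of \eqref{divisor-sum-gf1}. Since $F_k(n)=0$ whenever $n<k$ (no partition of $n$ can contain a part equal to $k>n$), interchanging the order of summation gives
\begin{equation*}
\sum_{n=0}^\infty\Big(\sum_{k=1}^n f(k)F_k(n)\Big)q^n=\sum_{k=1}^\infty f(k)\sum_{n\ge 0}F_k(n)q^n=\sum_{k=1}^\infty f(k)N_k(q).
\end{equation*}
Substituting \eqref{freq-gen-gf} turns this into $Q(q)\sum_{k\ge 1}\frac{f(k)b_kzq^k}{1-zq^k}$, so \eqref{divisor-sum-gf1} reduces to the Lambert-type identity
\begin{equation*}
\sum_{k=1}^\infty\frac{f(k)b_kzq^k}{1-zq^k}=\sum_{n=1}^\infty g(n)q^n .
\end{equation*}

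The key step, and the only one requiring care, is this last rearrangement. Expanding each summand as a geometric series, $\frac{zq^k}{1-zq^k}=\sum_{r\ge 1}z^rq^{rk}$, the left-hand side becomes $\sum_{k\ge 1}\sum_{r\ge 1}f(k)b_kz^rq^{rk}$. Collecting the terms with $rk=n$, that is, summing over the divisors $k\mid n$ with $r=n/k$ so that $z^r=z^{n/k}$, produces exactly the coefficient $\sum_{k\mid n}b_kf(k)z^{n/k}=g(n)$ from \eqref{gn}. This proves the Lambert identity and hence \eqref{divisor-sum-gf1}. I expect the relabelling $n=rk$ together with the justification of interchanging the two infinite sums to be the main (and only) obstacle; both are routine for $|z|,|q|<1$.

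Finally, \eqref{divisor-sum} follows by extracting the coefficient of $q^n$ from both sides of \eqref{divisor-sum-gf1}: on the left the Cauchy product of $Q(q)=\sum_{m\ge 0}P(m)q^m$ with $\sum_{k\ge 1}g(k)q^k$ contributes $\sum_{k=1}^n g(k)P(n-k)$, while the right-hand side contributes $\sum_{k=1}^n f(k)F_k(n)$. Alternatively, one can prove \eqref{divisor-sum} directly, bypassing generating functions, by inserting the explicit matrix entries \eqref{partitions-aij} in the form $F_k(n)=\sum_{r\ge 1}b_kz^rP(n-rk)$ into $\sum_{k=1}^n f(k)F_k(n)$ and relabelling $m=rk$; the same divisor regrouping then returns $\sum_{m=1}^n g(m)P(n-m)$. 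Either way, the argument uses only the row generating function \eqref{freq-gen-gf} (equivalently the entries \eqref{partitions-aij}), and the column-sum relation \eqref{gen-column-sums} is not needed here.
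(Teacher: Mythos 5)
Your proposal is correct, but it runs in the opposite direction from the paper. The paper proves the finite identity \eqref{divisor-sum} first, by a purely finitary manipulation: multiply the $k$th row of the matrix equation \eqref{gen-a} by $f(k)$, sum over $k$, and regroup the terms $a_k(j)=b_kz^{j/k}$ (for $k\mid j$) into the divisor sum \eqref{gn}; the generating-function identity \eqref{divisor-sum-gf1} is then obtained afterwards by multiplying by $q^n$ and summing over $n$. You instead prove \eqref{divisor-sum-gf1} first, via the row generating function \eqref{freq-gen-gf} and the Lambert-series expansion $\sum_k f(k)b_k zq^k/(1-zq^k)=\sum_n g(n)q^n$, and then extract coefficients to get \eqref{divisor-sum}. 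The key step---regrouping the double sum $\sum_k\sum_r b_kf(k)z^rq^{rk}$ over $rk=n$---is the same in both arguments, and your closing remark (inserting $F_k(n)=\sum_{r\ge 1}b_kz^rP(n-rk)$ and relabelling $m=rk$) is precisely the paper's proof. What the paper's order buys is that every step is a finite sum, so no analytic or formal justification is ever needed; what your order buys is a cleaner conceptual statement (the proposition is literally the Lambert-series identity multiplied by $Q(q)$). One caveat: your appeal to absolute convergence ``since $|z|<1$ and $|q|<1$'' is not justified as stated, because $(b_k)$ and $f$ are arbitrary, so $\sum_k f(k)b_kzq^k/(1-zq^k)$ need not converge for any $q\ne 0$; the interchanges should instead be justified coefficient-wise, i.e.\ as identities of formal power series in $q$, where each coefficient involves only finitely many terms (your own coefficient-collection argument already does exactly this). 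You are also right that \eqref{gen-column-sums} is not used; it enters only in the corollaries where $f(k)=k$.
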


%
%
\begin{Remarks}\
\begin{enumerate}
\item When $f(k)=k$, then in view of \eqref{gen-column-sums}, equations \eqref{divisor-sum} and \eqref{divisor-sum-gf1} reduce to
\begin{subequations}
\begin{align}
\sum_{k=1}^n g(k) P(n-k) &= nP(n); \label{divisor-sum-b}\\
Q(q)\sum_{k=1}^\infty {g(k)} q^k &=\sum_{n=0}^\infty nP(n)q^n.
\label{divisor-sum-gf1-b} 
\end{align}
\end{subequations}
\item  When $z=1$, and $g(n)$ and $b_kf(k)$ are both arithmetic functions, then
we obtain results connecting number-theoretic functions to partition-theoretic functions. 
\item The sequence $g(n)$ is the sequence of the weighted sums of the $n$th column of the matrix given
in \eqref{partitions-aij}; more precisely, $g(n)$ is the $n$th column sum of the matrix 
$$(f(1), f(2), \dots, f(n)) A_n.$$ 
\item Many results that fit the form of Proposition~\ref{prop:divisor-sums} were previously obtained by taking logarithmic derivatives of the relevant generating functions. 
Thus the enumeration result in Theorem~\ref{th:gen-freq-matrix} can be considered an
alternate approach to this standard technique.
\end{enumerate}
\end{Remarks}
\begin{proof}
To obtain \eqref{divisor-sum}, multiply the $k$th row by $f(k)$ on both sides of the equation \eqref{gen-a}. The $k$th row on either side is
$$\sum_{j>0} f(k) a_k(j) P(n-j) = f(k)F_k(n),$$
where $a_i(j)$ is given by \eqref{partitions-aij}. Now summing over $k$,
and noting that $a_k(j)= b_k z^{j/k}$ when $k\mid j$ and $0$ otherwise, we obtain the result.

To obtain \eqref{divisor-sum-gf1}, we multiply both sides of \eqref{divisor-sum} by $q^n$ and sum over $n$. 
\end{proof}

Let $P(n)$ and $Q(q)=Q(z,q)$ be given by \eqref{gf-gen-partition}. By Theorem~\ref{th:gen-freq-matrix}, the conditions of the proposition are satisfied, and we have the equations  \eqref{divisor-sum} and\eqref{divisor-sum-gf1}. 
In the examples below, we illustrate the application of these equations to obtain several classical and some new results.  In addition, we use \eqref{freq-gen-gf} in the form:
\begin{equation}
\frac{b_k z q^k}{1-zq^k} Q(q) =  \sum_{n=0}^\infty F_k(n) q^n. \label{row-equation}
\end{equation}

\begin{Example}[Variations of Ramanujan's recursion]
Ramanujan's recurrence  \eqref{ramanujan-rec} follows from \eqref{divisor-sum}. Let $z=1$, $b_k=1$ and $f(k)=k$, for all $k$, so that $g(n)=\sigma(n)$. 
There are several ways one can find analogous results. Let $\sigma_k(n)$ now denote the sum of $k$th powers of the divisors of $n$ (with $\sigma_1(n)=\sigma(n)$).  In  the following, we take $z=1$. 

First take $b_k=k$ and $f(k)=k$. Then $g(n)=\sigma_2(n)$, the sum of squares of the divisors of $n$. We obtain a well-known recursion for plane partitions (denoted $PL(n)$):
$$nPL(n)=\sum_{d=1}^{n} \sigma_2(d) PL(n-d).$$
See, for example, \cite[Ex.~7, pg.~28]{Mac1995}. Alternatively, let $b_k=r$, so we obtain 
colored partitions where each part $k$ comes in $r$ colors. In this case, we obtain
$$np_r(n)=r\sum_{d=1}^{n} \sigma_1(d) p_r(n-d),$$
where we have denoted the number of associated partitions by $p_r(n)$. This gives a solution to
Apostol~\cite[Ex.~9, Ch.~14]{Apostol1976}. (However, our solution is valid even when $r$ is not a positive integer.)

Another possibility is to take $f(k)=k^m$ (keeping $b_k=1$) for all $k$.  We obtain
an expression for the $m$th moment of the frequency function corresponding to the ordinary partitions:
\begin{equation}\label{def:moments}
M_m(n):=\sum_{k=1}^n k^m F_k(n)=\sum_{d=1}^{n} \sigma_m(d) p(n-d).
\end{equation}

Finally, another variation is obtained by taking different arithmetic functions. For example,
consider the M\"{o}bius function $\mu$ defined by
\begin{equation}\label{mu-def}
\mu(n) := 
\begin{cases}
1 & \text{if } n =1,\\
(-1)^k & \text{if } n=p_1p_2p_3\dots p_k, \\
0 & \text{otherwise.} \\
\end{cases} 
\end{equation}
It is well known \cite[p.~25]{Apostol1976} that 
\begin{equation}\label{sum-mu}
\sum_{d\mid n} \mu (d) = 
\begin{cases} 1 & \text{if } n=1,\\
0 & \text{otherwise}.
\end{cases}
\end{equation}
Taking the case $b_k=1$, $f(k)=\mu(k)$ gives
\begin{equation}
\sum_{k=1}^n \mu(k) F_k(n) = p(n-1),
\end{equation}
which indicates the close relationship of $F_k(n)$ with the partition function. 
Analogous results can be obtained for $b_k=r$ (a constant), and, when $f(k)$ is any other interesting arithmetic function (such as Euler's totient function or Liouville's function \cite[p.~37]{Apostol1976}), where there are identities of the form
$$g(n)=\sum_{d\mid n} f(d).$$
\end{Example}

\begin{Example}(\cite[p.~327]{Apostol1976})\label{exp1} We now obtain a nice result regarding the connection of $\mu(n)$ with the exponential function. We want to use \eqref{sum-mu} to obtain a nice 
expression for $g(n)$. Take $z=1$, $b_k=\mu(k)/k$ and $f(k)=k$ in \eqref{gn} to obtain
$$g(n)=
\begin{cases} 1 & \text{if } n=1,\\
0 & \text{otherwise}.
\end{cases}$$
Then \eqref{divisor-sum-b} reduces to
$$nP(n) =\sum_{k=1}^n g(k) P(n-k) = P(n-1).$$
This gives, on iteration, $P(n)=1/n!$, and 
$$Q(q)=\sum_{n=0}^\infty \frac{q^n}{n!} =e^q;$$ or, 
$$\prod_{k=1}^\infty \big({1-q^k}\big)^{-\frac{\mu(k)}{k}} = e^q.
$$
This is valid for $|q|<1$, and as formal power series. 
\end{Example}

The next example uses a technique of Euler. 
We use Euler's pentagonal number theorem:  
\begin{equation}\label{PNT}
\pqrfac{q}{\infty}{q} = \sum_{k=-\infty}^\infty (-1)^k q^{\frac{k(3k-1)}{2}} 
= \sum_{n=0}^\infty \euler(n)q^n,
\end{equation}
where 
$$\euler(n)=\begin{cases}
(-1)^k & \text{if }  n=\frac{k(3k\pm 1)}{2},\\
0 & \text{otherwise.} 
\end{cases}
$$
Recall that $\euler(n)$ is the number of partitions of $n$ consisting of an even number of distinct parts minus the number of partitions of $n$ with an odd number of distinct parts. 

\begin{Example}(Moments of the frequency function)\label{ex:moments} We extend some formulas of Andrews and Mirca \cite{AM2020}.  Let $M_m(n)$, defined in \eqref{def:moments}, denote the $m$th moment of the frequency function corresponding to partitions.
Take $b_k=1$ and $f(k)=k^m$ for all $k$.
Then \eqref{divisor-sum-gf1} can be written as
\begin{equation}
\sum_{n=0}^\infty M_m(n)q^n=\sum_{n=0}^\infty \sum_{k=1}^n k^m F_k(n)q^n =
\frac{1}{\pqrfac{q}{\infty}{q}}\sum_{n=0}^\infty\sigma_m(n)q^n .
\end{equation}
Multiplying both sides by 
$\pqrfac{q}{\infty}{q}$, using \eqref{PNT} and comparing coefficients of $q^n$ on both sides,
we obtain
\begin{equation}\label{moments1}
\sigma_m(n)=\sum_{j=-\infty}^\infty (-1)^j M_m \left(n- j(3j-1)/2\right).
\end{equation}
For $m=0$ and $m=1$, we obtain results of Andrews and Merca \cite{AM2020}. 

A related result is obtained by considering \eqref{row-equation} with $z=1$, $b_k=1$ and $f(k)=1$  (so $m=0$ in the above).
We find that
$$\sum_{n=1}^\infty q^{kn} = \pqrfac{q}{\infty}{q}\sum_{n=0}^\infty F_k(n) q^n.$$
On comparing the coefficients of $q^n$ on both sides we find that 
\begin{equation}
\sum_{j=-\infty}^\infty (-1)^j F_k \left(n- j(3j-1)/2\right)  =
\begin{cases}
1 & \text{if } k\mid n, \\
0  & \text{otherwise.}
\end{cases}
\end{equation}
This is a refinement of the last formula of \cite{AM2020}. Evidently  $\sum_k F_k(n)$ is the total number of parts occurring in all the partitions of $n$. If we sum over $k$, we obtain their formula. 
\end{Example}

Next we obtain recurrences for the divisor function $\sigma_1(n)$, including Euler's recurrence \eqref{euler-sigma}. What is different from the previous example is that we consider different partition-theoretic functions. 
\begin{Example}[Recurrences for $\sigma_1(n)$]
To obtain \eqref{euler-sigma}, we take $b_k=-1$ for all $k$. The role of partitions is played by $\euler(n)$
in Euler's formula \eqref{PNT}.  We take $f(k)=k$, so that $g(n)=-\sigma_1(n)$. Now 
\eqref{divisor-sum-gf1-b} and \eqref{PNT} give
$$\pqrfac{q}{\infty}{q} \sum_{n=1}^\infty (-1) \sigma_1(n)q^n  =
\bigg(\sum_{k=-\infty}^\infty (-1)^k q^{\frac{k(3k-1)}{2}} \bigg) 
\bigg( \sum_{n=0}^\infty (-1)\sigma_1(n)q^n \bigg)
=\sum_{n=0}^\infty n\euler(n)q^n.$$
On comparing coefficients of $q^n$ we obtain \eqref{euler-sigma} in the form
\begin{equation}\label{euler-sigma2}
\sum_{j=-\infty}^\infty (-1)^{j+1}\sigma_1\big(n-j(3j-1)/2\big) = n\euler(n).
\end{equation}
Note that to match the two forms, we take $\sigma_1(0)=n$ (if it occurs) in \eqref{euler-sigma}.

As a variation, we now use a result of Jacobi~\cite[p.~500]{AAR1999}:
\begin{equation}\label{jacobi1}
\left(q;q\right)^3_\infty = \sum_{k=0}^\infty (-1)^k (2k+1) q^{\frac{k(k+1)}{2}}.
\end{equation}
Take $b_k=-3$ and $f(k)=k$. Here $Q(q)$ reduces to $\left(q;q\right)^3_\infty$, and we use \eqref{jacobi1} rather than \eqref{PNT}
to expand the products. On comparing coefficients as before, we obtain a 
result of Ewell~\cite[Th.\ 3]{Ewell1977}:
\begin{equation*}
\sum_{k=0}^\infty (-1)^k (2k+1)\sigma_1\big( n-k(k+1)/2\big) 
=
\begin{cases} 
(-1)^{k+1}  \frac{k(k+1)(2k+1)}{6} & \text{if } n =\frac{k(k+1)}{2},\\
0 & \text{otherwise}.
\end{cases}
\end{equation*}

Another interesting identity is obtained by noting that
\begin{align*}
\pqrfac{q}{\infty}{q} \sum_{n=0}^\infty \sigma_1(n)q^n  &=\sum_{n=0}^\infty n\euler(n)q^n,\\
\intertext{and}
-\frac{1}{\pqrfac{q}{\infty}{q}} \sum_{n=0}^\infty \sigma_1(n)q^n  &=\sum_{n=0}^\infty np(n)q^n.
\end{align*}
On multiplying the two and comparing the coefficient of $q^n$ on both sides, we obtain
an identity for the convolution of the $\sigma$ function in terms of the partition function
\begin{multline}
\sum_{k=1}^{n-1} \sigma_1(k)\sigma_1(n-k) \\
=
\sum_{j=-\infty}^\infty (-1)^{j+1}\bigg(\frac{j(3j-1)}{2}\bigg)\bigg(n-\frac{j(3j-1)}{2}\bigg)  p\Big(n-\frac{j(3j-1)}2\Big). 
\end{multline}
For another recurrence relation for $\sigma_1(n)$, see Example~\ref{ex:JTP-prod}.
\end{Example}

We have now seen several techniques to obtain results connecting certain arithmetic functions of elementary number theory with partition-theoretic objects. Next, we extend the applicability of these techniques by considering arbitrary generating functions. 

\section{A calculus for obtaining PFE Matrices}\label{sec:calculus}

We have seen in Section~\ref{sec:PFE} that given the PFE matrix, we can determine $P(n)$ and $F_k(n)$. In addition, we took $P(n)$ to be generated by infinite products of the form \eqref{gf-gen-partition}, or products thereof, and computed the matrix.
We now consider any power series of the form 
\begin{equation}\label{power-series}
Q(q)=\sum_{n=0}^\infty P(n) q^n
\end{equation}
and find a PFE matrix associated with it. 
This will enhance the applicability of the techniques of Section~\ref{sec:arithmetical-examples}.

\begin{Theorem}\label{th:series-PFE}
Let $P(n)$ be a given sequence with $P(0)=1$. 
 Then there is a unique sequence
$(b_n)$ and a frequency function $F_k(n)$, for $k=1, 2, \dots, n$, 
satisfying \eqref{gen-column-sums} and matrix equations of the form \eqref{gen-weighted},
where the entries of the sequence of $n\times n$ matrices $A^\prime_n$ are given by
$$a_i(j) = 
\begin{cases}
i b_i & \text{if } i\mid j ,\\
0 & \text{otherwise}.\\
\end{cases} \label{wieghted-aij} 
$$
\end{Theorem}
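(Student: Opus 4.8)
Theorem 4.1 asserts that for any sequence $P(n)$ with $P(0)=1$, there is a unique sequence $(b_n)$ so that the associated PFE matrix — built exactly as in the product case (the $i$th row supported on multiples of $i$, with weight $ib_i$) — reproduces $P(n)$ via the weighted matrix equation together with the column-sum relation. The content is that every power series arises from a Weierstrass-type product $\prod_k (1-q^k)^{-b_k}$, at least formally, and that the $b_k$ are forced.

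**Plan of proof.** The cleanest route is to reduce the statement to the already-proved machinery for products by showing that the $b_k$ are determined by taking the logarithm of the generating function. Concretely, I would set $z=1$ in the product form $Q(q)=\prod_{k\ge 1}(1-q^k)^{-b_k}$ and ask whether the given $P(n)$ can be written this way. Taking the logarithmic derivative (equivalently, using the standard Weierstrass-product correspondence between a power series with constant term $1$ and an exponent sequence) converts the infinite product into a condition that determines $b_1, b_2, \dots$ recursively. First I would define $b_n$ inductively: having chosen $b_1, \dots, b_{n-1}$, the coefficient of $q^n$ in $\prod_{k<n}(1-q^k)^{-b_k}$ is already fixed, and $b_n$ is the unique value making the coefficient of $q^n$ in $\prod_{k\le n}(1-q^k)^{-b_k}$ equal to the prescribed $P(n)$. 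Since $(1-q^n)^{-b_n}=1+b_n q^n+\cdots$ contributes $b_n$ to the $q^n$ coefficient linearly (all lower factors contribute only a constant term $1$ at order $q^n$ beyond what is already accounted for), this equation is linear in $b_n$ with leading coefficient $1$, hence uniquely solvable. This establishes both existence and uniqueness of $(b_n)$.

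**Assembling the conclusion.** Once $(b_n)$ is fixed so that $Q(q)=\prod_k(1-q^k)^{-b_k}=\sum_n P(n)q^n$, I would simply invoke Theorem~\ref{th:gen-freq-matrix} with $z=1$. That theorem, applied to this very product, supplies the frequency functions $F_k(n)$ defined by \eqref{gen-freq-function}, the column-sum identity \eqref{gen-column-sums}, and the matrix equation \eqref{gen-weighted} whose entries $a_i'(j)=ib_i z^r=ib_i$ (since $z=1$ and the support condition $i\mid j$ forces the relevant power of $z$ to be $1$) are exactly the claimed entries $ib_i$ for $i\mid j$ and $0$ otherwise. Thus the $F_k(n)$ produced by Theorem~\ref{th:gen-freq-matrix} are the desired frequency functions, and uniqueness of $F_k(n)$ follows because Lemma~\ref{lemma:enumeration} shows the two displayed equations together with $P(0)=1$ determine all the $F_k(n)$.

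**Expected obstacle.** The main point requiring care is the convergence or formal-power-series status of the infinite product. I would phrase the whole argument at the level of formal power series in $q$, where the product $\prod_k(1-q^k)^{-b_k}$ is well defined because the $k$th factor only affects coefficients of $q^k$ and higher; then no analytic convergence is needed and the inductive determination of $b_n$ is manifestly legitimate. The only genuinely substantive verification is the claim that the $q^n$ coefficient depends on $b_n$ linearly with unit leading term, which is the crux of uniqueness; everything else is an appeal to the earlier theorem. I do not expect difficulty beyond bookkeeping once the formal-power-series framing is fixed.
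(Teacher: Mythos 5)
Your proposal is correct in outline but takes a genuinely different route from the paper. The paper works directly with the two defining equations and induction: for $n=1$, \eqref{gen-column-sums} gives $F_1(1)=P(1)$ and the first row of \eqref{gen-weighted} then gives $b_1$; assuming $b_1,\dots,b_{n-1}$ are known, rows $1,\dots,n-1$ of \eqref{gen-weighted} determine $F_1(n),\dots,F_{n-1}(n)$, the column-sum relation \eqref{gen-column-sums} determines $F_n(n)$, and the $n$th row (which reads $nF_n(n)=nb_nP(0)$) determines $b_n$. Every quantity is forced at each step, so existence and uniqueness come out simultaneously, with no reference to infinite products at all. You instead first build the product representation $\prod_k(1-q^k)^{-b_k}=\sum_n P(n)q^n$ by formal coefficient matching (your linearity-in-$b_n$ observation is correct, since $(1-q^n)^{-b_n}=1+b_nq^n+O(q^{2n})$ and the earlier factors have constant term $1$), and then quote Theorem~\ref{th:gen-freq-matrix} with $z=1$ to manufacture the $F_k(n)$ and the required equations. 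What your route buys is the explicit product form \eqref{sum-prod}, essentially Euler's algorithm, which the paper only discusses after the theorem; what the paper's route buys is self-containedness and a uniqueness proof that is immediate.

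The one point you must tighten is uniqueness of $(b_n)$. What you actually prove is that there is a unique $(b_n)$ whose \emph{product} equals the given series; the theorem asserts uniqueness among all sequences $(b_n)$ (with accompanying $F_k(n)$) satisfying \eqref{gen-weighted} and \eqref{gen-column-sums} for the given $P(n)$, and these are not a priori the same statement. The bridge is short, using tools you already cite: if some competitor $(b_n')$ and $F_k'(n)$ satisfy the equations for the given $P(n)$, then by Lemma~\ref{lemma:enumeration} the matrix built from $(b_n')$ together with $P(0)=1$ determines its solution sequence uniquely, while by Theorem~\ref{th:gen-freq-matrix} the coefficients of $\prod_k(1-q^k)^{-b_k'}$ also solve those same equations with the same initial condition; hence the product formed from $(b_n')$ must equal $\sum_n P(n)q^n$, and your coefficient-matching uniqueness then forces $b_n'=b_n$. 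Add that chain of reasoning and your argument is complete.
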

\begin{Remarks}
\
\begin{enumerate} 
\item The $P(n)$ in this theorem may depend on $z$ too, and $b_j$ may depend on $z$. Thus the matrix obtained may be different from that obtained in Theorem~\ref{th:gen-freq-matrix}.
\item Proposition~\ref{prop:divisor-sums} (with $z=1$) applies to the case where $Q(q)$ is given as a power series. 
\item Theorem~\ref{th:product-of-products} applies to the case where the generating function is given by a product of series, too. 
\end{enumerate}
\end{Remarks}
\begin{proof}
The proof is by induction. Note that \eqref{gen-column-sums} for $n=1$ gives $F_1(1)=p(1)$. 
Now the entries of the first row of $A_n$ (which are all equal to $a_1(1)=b_1$) can be determined 
from \eqref{gen-weighted}. Next if $b_1$, $b_2$, $\dots$, $b_{n-1}$ are known,  \eqref{gen-weighted} gives the values $F_1(n)$, $F_2(n)$, $\dots$, $F_{n-1}(n)$.  Use \eqref{gen-column-sums} to determine $F_n(n)$, and then \eqref{gen-weighted} again to find $b_n$. 
\end{proof}

We illustrate the use of this theorem by considering an example from 
Andrews \cite[Problem 12-2]{Andrews1971} (see also \cite[p.~105]{Andrews1986}, \cite[p.~31]{AE2004} and \cite{GB2015}).
\begin{Example}[How to discover the (first) Rogers--Ramanujan identity] Consider $P(n)$ to be the set of partitions of $n$ with the property that the 
 difference between parts is at least $2$. The initial values for $n=0, 1, 2, \dots$ of $P(n)$ 
are:
$1$, $1$, $1$, $1$, $2$, $2$, $3$, $3$, $4$, $5$, $6$, $7$, $9$, $10$, $12$, $14$. 
From here we compute the matrix using Theorem~\ref{th:series-PFE}. 

We find that $F_1(1)=1$. This gives $b_1=1$. 
Next, for $n=2$, \eqref{part-a} gives
$$
\begin{pmatrix}
1 & 1 \\
0 & 2b_2
\end{pmatrix}
\begin{pmatrix}
1\\
1
\end{pmatrix}
=
\begin{pmatrix}
2\\
2b_1
\end{pmatrix}
=
\begin{pmatrix}
F_1(2)\\
2F_2(2)\\
\end{pmatrix}.
$$
This gives $F_1(2)=2$. Now, from \eqref{gen-column-sums} we get $2+2F_2(2)=2$,
so $2F_2(2)=2b_2=0$. Thus,  $b_2 = 0$.  
For $n=3$, we have
$$
\begin{pmatrix}
1 & 1 &1 \\
0 & 0 & 0\\
0 & 0 &3b_3 \\
\end{pmatrix}
\begin{pmatrix}
1\\
1\\
1
\end{pmatrix}
=
\begin{pmatrix}
3\\
0\\
3F_3(3)
\end{pmatrix}.
$$
This gives $3P(3)=1\cdot 3+2\cdot 0+3\cdot F_3(3)=3$, so $F_3(3)=0=b_3$. The next step gives
$$
\begin{pmatrix}
1 & 1 &1 &1 \\
0 & 0 & 0& 0\\
0 & 0 &0 & 0 \\
0& 0 & 0 & 4b_4
\end{pmatrix}
\begin{pmatrix}
1\\
1\\
1\\
1\\
\end{pmatrix}
=
\begin{pmatrix}
4\\
0\\
0\\
4b_4
\end{pmatrix}.
$$
This gives $F_1(4)=4$, $F_2(4)=F_3(4)=0$ and $4+4b_4=4 \cdot 2$, which gives $b_4=1$.
Carrying on in this fashion, we discover that the first few values of $b_k$ are given by the sequence
$$b_k=\begin{cases}
1 & k \equiv 1 \text{ or } 4 \text{ (mod $5$)},\\
0 & \text{otherwise.}
\end{cases}
$$
Assuming this pattern continues,  we see that the matrix obtained is the same as the one corresponding to 
the product $$\prod_{k=0}^\infty \frac{1}{(1-q^{5k+1})(1-q^{5k+4})},$$
which generates partitions with parts congruent to $1$ or $4$ (mod $5$).  
This shows how one can discover the (first) Rogers--Ramanujan identity. 
\end{Example}
This example shows that we can use
Theorem~\ref{th:series-PFE} to experimentally obtain an infinite product of the form
\begin{equation}\label{sum-prod}
1+\sum_{k=1}^\infty P(n)q^n=\prod_{k=1}^\infty \big(1-q^k\big)^{-b_k}
\end{equation}
corresponding to a power series. 
Andrews~\cite[Theorem~10.3]{Andrews1986} has given an algorithm (usually called Euler's algorithm) for this purpose. The relationship of 
Theorem~\ref{th:series-PFE} with Andrews' approach will become clear with the following theorem, which reverses the process we have followed so far.
Given an arithmetic sequence $g(n)$, we find an associated PFE matrix together with the associated partition and frequency functions. 

\begin{Theorem}\label{th:PFE-divisor-sums}
Let $g(n)$, $n=1, 2, 3, \dots$, be any sequence of complex numbers. Then there are sequences $P(n)$, for $n=1, 2, \dots$, with $P(0)=1$; $F_k(n)$ for $k=1, 2, \dots, n$; and a sequence $b_n$, satisfying 
\eqref{gen-column-sums} and the matrix equation 
\eqref{gen-weighted}, with entries given by
\begin{equation*}
a^\prime_i(j) = 
\begin{cases}
ib_i & \text{if } j=ri ,\\
0 & \text{otherwise},
\end{cases}
\end{equation*}
such that,
\begin{equation}\label{divisor-rec}
nP(n)=\sum_{k=1}^n g(k)P(n-k).
\end{equation}
In addition, the sequences $b_n$, $P(n)$ and $F_k(n)$ are uniquely determined. 
\end{Theorem}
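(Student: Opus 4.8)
The plan is to run the construction of Theorem~\ref{th:series-PFE} essentially in reverse, using the divisor--sum relation $g(n)=\sum_{d\mid n} d\,b_d$ (which is the $z=1$, $f(k)=k$ specialization of \eqref{gn}) as the bridge between the given sequence $g$ and the sought matrix. The three sequences $b_n$, $P(n)$ and $F_k(n)$ will each be pinned down by a triangular recursion, and the only substantive point is that the column--sum identity \eqref{gen-column-sums} then comes for free.

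First I would construct $(b_n)$. Writing the defining relation as $g(n)=n\,b_n+\sum_{d\mid n,\,d<n} d\,b_d$ and solving for $b_n$ gives the recursion $b_n=\tfrac1n\big(g(n)-\sum_{d\mid n,\,d<n} d\,b_d\big)$, which determines $b_n$ uniquely from $g$ (this is just M\"obius inversion of $g(n)=\sum_{d\mid n} d\,b_d$, though the explicit formula is not needed). Independently, I would construct $P(n)$ from the target recurrence \eqref{divisor-rec}: with $P(0)=1$, the identity $nP(n)=\sum_{k=1}^n g(k)P(n-k)$ expresses $P(n)$ in terms of $P(0),\dots,P(n-1)$, so it determines $(P(n))$ uniquely. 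Finally I would read off $F_k(n)$ from the matrix equation \eqref{gen-weighted}: with the stated entries $a^\prime_i(j)=i\,b_i$ when $i\mid j$, the $k$th row gives $kF_k(n)=k\,b_k\sum_{r\ge1,\,rk\le n}P(n-rk)$, hence $F_k(n)=b_k\sum_{r\ge1,\,rk\le n}P(n-rk)$, again uniquely.

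The key step is then to verify that \eqref{gen-column-sums} holds automatically. Summing the expressions for $F_k(n)$,
\[
\sum_{k=1}^n kF_k(n)=\sum_{k=1}^n k\,b_k\!\!\sum_{r\ge 1,\,rk\le n}\!\!P(n-rk),
\]
and reindexing by $m=rk$, the inner and outer sums combine as $\sum_{m=1}^n P(n-m)\sum_{d\mid m} d\,b_d=\sum_{m=1}^n g(m)P(n-m)$, using the very relation that defined $(b_n)$. By \eqref{divisor-rec} this last sum equals $nP(n)$, which is precisely \eqref{gen-column-sums}. I expect this interchange of summation --- recognizing that $g(m)=\sum_{d\mid m} d\,b_d$ is exactly the ingredient that turns the weighted column sum into the convolution in \eqref{divisor-rec} --- to be the one place requiring care; everything else is bookkeeping.

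For uniqueness, each of the three recursions above is forced: $(b_n)$ by the divisor--sum relation, $(P(n))$ by \eqref{divisor-rec}, and $(F_k(n))$ by the matrix equation. Alternatively, one can deduce the whole statement from results already proved: define $P(n)$ by \eqref{divisor-rec}, apply Theorem~\ref{th:series-PFE} to this sequence to obtain the unique $(b_n)$ and $F_k(n)$ satisfying \eqref{gen-column-sums} and \eqref{gen-weighted}, and then invoke \eqref{divisor-sum-b} of Proposition~\ref{prop:divisor-sums} with $\hat g(n)=\sum_{d\mid n} d\,b_d$ to get $\sum_k \hat g(k)P(n-k)=nP(n)$; comparing this with the defining recurrence for $P$ and using $P(0)=1$ forces $\hat g=g$ by induction, so \eqref{divisor-rec} indeed holds with the given $g$.
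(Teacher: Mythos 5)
Your proposal is correct, but it inverts the paper's construction, so the two routes are genuinely different. The paper also begins by defining $b_n$ through M\"obius inversion of $g(n)=\sum_{d\mid n}d\,b_d$ (your first step, \eqref{bn-from-gn}), but it then takes $P(n)$ and $F_k(n)$ to be the partition and frequency functions of the infinite product \eqref{gf-gen-partition} with $z=1$ and these exponents $b_k$: Theorem~\ref{th:gen-freq-matrix} then supplies \eqref{gen-weighted} and \eqref{gen-column-sums}, Proposition~\ref{prop:divisor-sums} (with $f(k)=k$) yields \eqref{divisor-rec} as a consequence, and uniqueness is read off from \eqref{bn-from-gn} and Lemma~\ref{lemma:enumeration}. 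You instead define $P(n)$ by the recurrence \eqref{divisor-rec} itself and $F_k(n)$ by the matrix equation, and then prove \eqref{gen-column-sums} by the reindexing $m=rk$ --- which is exactly the computation inside the proof of Proposition~\ref{prop:divisor-sums}, run in the opposite logical direction. What your route buys is self-containedness: you never need the product representation nor the rising-factorial formulas behind Theorem~\ref{th:gen-freq-matrix}; everything is a triangular recursion. What the paper's route buys is the identification of $P(n)$ as the coefficient sequence of $\prod_k(1-q^k)^{-b_k}$, which is what makes the theorem an engine for Euler's algorithm in the discussion that follows it. One small caution about your primary uniqueness argument: the relation $g(n)=\sum_{d\mid n}d\,b_d$ is not a hypothesis of the theorem, so saying that $(b_n)$ is ``forced by the divisor-sum relation'' begs the question; you must first show that any admissible triple satisfies that relation. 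Your own key computation does this --- \eqref{gen-weighted} and \eqref{gen-column-sums} give $\sum_{m=1}^n \hat g(m)P(n-m)=nP(n)$ with $\hat g(m)=\sum_{d\mid m}d\,b_d$, and comparing with \eqref{divisor-rec} forces $\hat g=g$ by induction since $P(0)=1$ --- and your alternative route (defining $P$ by \eqref{divisor-rec} and invoking the uniqueness clause of Theorem~\ref{th:series-PFE}) closes the same gap even more directly, so this is a matter of arrangement rather than substance.
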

\begin{proof}
\begin{subequations}
We define $b_n$ by means of the equation
\begin{equation}\label{gn-from-bn}
g(n)=\sum_{d\mid n} d b_d;
\end{equation}
so, by M\"{o}bius inversion \cite[p.~30]{Apostol1976}, we have 
\begin{equation}\label{bn-from-gn}
nb_n = \sum_{d\mid n} \mu\big(\frac{n}{d}\big) g(d),
\end{equation}
\end{subequations}
where $\mu(n)$ is defined in \eqref{mu-def}. Next take $Q(q)$ as in \eqref{gf-gen-partition}  with $z=1$
to  obtain the associated partition function $P(n)$ and the associated frequency function $F_k(n)$. Theorem~\ref{th:gen-freq-matrix}  implies they satisfy the matrix equation \eqref{gen-weighted} with entries as given, and 
\eqref{gen-column-sums}. Further, Proposition~\ref{prop:divisor-sums} (with $z=1$) implies
\eqref{divisor-rec}. Finally, note that \eqref{bn-from-gn} determines $b_n$, and by Lemma~\ref{lemma:enumeration}, the sequences $P(n)$, $F_k(n)$ are uniquely determined. 
\end{proof}

The aforementioned algorithm of Andrews~\cite[Theorem~10.3]{Andrews1986}  for finding the product form 
in  \eqref{sum-prod} 
combines  \eqref{divisor-rec} and \eqref{gn-from-bn} in the following two forms:
\begin{subequations}
\begin{align}\label{divisor-recurrence-a}
nP(n) &=  nb_n+\sum_{d\mid n \atop d<n} db_d+\sum_{k=1}^{n-1} g(k)P(n-k) \\
&= nb_n+\sum_{d\mid n \atop d<n} db_d+\sum_{k=1}^{n-1} \sum_{d\mid k} db_d P(n-k).
\label{divisor-recurrence-b}
\end{align}
\end{subequations}
One difference between Andrews' approach and that of this paper 
is that we do not restrict the use of 
Euler's algorithm to the case where $P(n)$ and $b_k$ in \eqref{sum-prod} are integers. 

Next, we give a few examples to illustrate a calculus for finding the PFE matrix for various kinds of functions. First we  show how one can find the PFE matrix for finite products. 

\begin{Example}[The complete symmetric function] We compute the PFE matrix for the complete homogeneous symmetric polynomial $h_k(x)$ in $m$ variables $x=(x_1, \dots, x_m)$. Take
$$Q_m(q)=
\prod_{k=1}^m \frac{1} { 1-x_k q} =\sum_{n=0}^\infty h_n(x) q^{n}.$$
For fixed $k$, 
the PFE matrix for $1/(1-x_kq)$ is the row matrix
$$
\left[ x_k, x_k^2, x_k^3, \dots \right].$$ 
This follows from Theorem~\ref{th:gen-freq-matrix} with $z\mapsto x_k$. 
In view of Theorem~\ref{th:product-of-products}, applied in the form \eqref{product-form2}, we obtain for any fixed $m$, and $n=1, 2, 3, \dots$ an $m\times n$ matrix $A_n^m$ with $(i,j)$th entry $x_i^j$. 

The weighted sums $g(n)$ from \eqref{gn} reduce to $p_m(x)$, the power symmetric functions, and, in view of 
\eqref{sumfk-prod}, equation \eqref{divisor-sum} reduces to \cite[eq.\ 2.11]{Mac1995}:
$$\sum_{k=1}^n p_k(x)h_{n-k}(x)=nh_n(x).$$

In this example, we can increase the number of variables in $x$ indefinitely, as is routinely done in the theory of symmetric functions. 
\end{Example}

There is another way to take a limiting process, which we illustrate by the following example.
\begin{Example}[A recurrence for $\zeta(2n)$]\label{ex:zeta} We compute the PFE matrix of
$$\frac{\sin \pi z}{\pi z} =\prod_{k=1}^\infty \Big( 1-\frac{z^2}{k^2}\Big) 
=\sum_{k=0}^\infty \frac{(-1)^k \pi^{2k} z^{2k}}{(2k+1)!}
$$
in two different ways. 

First note that for any fixed $k$, the PFE matrix for $(1-z^2/k^2)$ is the row matrix
$$
\left[ -\frac{1}{k^2}, -\frac{1}{k^4}, -\frac{1}{k^6}, \dots \right],$$
where the variable $q$ is taken to be $z^2$.  
This follows from Theorem~\ref{th:gen-freq-matrix} with $z\mapsto 1/k^2$, $q\mapsto z^2$. Now let
$$Q_m(z^2) = \prod_{k=1}^m \Big( 1-\frac{z^2}{k^2}\Big) =\sum_{n} P_n^m z^{2n}.$$
In view of Theorem~\ref{th:product-of-products}, applied in the form \eqref{product-form2}, we obtain, for any fixed $m$, and $n=1, 2, 3, \dots$, the $m\times n$ matrix $A_n^m$ with $(i,j)$th entry
$$a_i(j)=-\frac{1}{i^{2j}}.$$
This set of matrices can be used to compute $P_n^m$ using Lemma~\ref{lemma:enumeration}
(take $A_n\mapsto A_n^m, U_k=1, V_k=k, m_n=m$). 
The function defined by \eqref{gn} 
is given by 
$$g_m(n)=(-1)\sum_{k=1}^m \frac{1}{k^{2n}},$$
and from \eqref{sumfk-prod} and \eqref{divisor-sum} we obtain 
$$\sum_{k=1}^n g_m(k) P_n^m(n-k) = nP_n^m(n).$$
As $m\to\infty$, this becomes a formula due to Song \cite{Song1988}:
$$\sum_{k=1}^n (-1) \zeta(2k)a_{2(n-k)} = na_{2n},$$ 
where
$$a_{2n} = (-1)^n\frac{\pi^{2n}}{(2n+1)!}.$$
The coefficients $a_{2n}$ are obtained from the coefficients of the power series for $\sin{\pi z}/{\pi z}$. We can rewrite this formula  in the form \eqref{zeta-rec} mentioned in the introduction. 

On the other hand, taking 
$g(n)=-\zeta(2n)$,
we can obtain a product of the form
$$\prod_{k=1}^\infty \big(1-z^{2k}\big)^{-b_k},$$
where, from \eqref{bn-from-gn},
$$k b_k = - \sum_{d\mid k} \mu\big( {k}/{d}\big) \zeta(2d).$$
Thus we must have
\begin{equation}\label{alt-zeta}
\frac{\sin \pi z}{\pi z} = \prod_{k=1}^\infty \big(1-z^{2k}\big)^{\frac{1}{k}\sum_{d\mid k} \mu( {k}/{d}) \zeta(2d)}.
\end{equation}
This formula is valid as a formal power series in $z$. As an analytic formula, some further justification is required, which we do not provide here. 
\end{Example}
We can extend this idea and write the PFE matrix of any function which can be written as a Weierstrass product (see \cite[Th.\ 7, p.~194]{Ahlfors1966}). This requires us to calculate a PFE matrix for the exponentials of polynomials.  

We require a simple trick. Note that if $g(n)$ is a given function, then one PFE matrix that corresponds to it is the row matrix
$$\big[ g(1), g(2), g(3), \dots \big].$$
This is easy to verify. However, this may not be the same matrix as generated by Theorem~\ref{th:PFE-divisor-sums}.

We re-look at Example~\ref{exp1} to obtain a PFE matrix for $e^{aq}$. Note that the coefficients $P(n)$ in the power series expansion of 
$e^{aq}$ satisfy $$nP(n)=aP(n-1).$$
Comparing with \eqref{divisor-rec}, we see that $g(n)$ defined as follows works:
$$g(n)=\begin{cases}
a & \text{for } n=1,\cr
0 & \text{for } n>1.
\end{cases}
$$
So one PFE matrix generating $e^{aq}$ is 
$$\big[ a, 0, 0, \dots \big].$$
Similarly, one can generate a matrix for $e^{\frac{a}{m}q^m}$ by taking a row matrix with $a$ in the $m$th place and $0$s elsewhere. To obtain the matrix for the exponential of a polynomial in $q$, we use Theorem~\ref{th:product-of-products}. 

\begin{Example}[Gamma Function] The Gamma function has a Weierstrass product given by
$$\frac{1}{\Gamma(z)} = ze^{\gamma z} \prod_{k=1}^\infty \Big( 1+\frac{z}{k}\Big) e^{-z/n},$$
where $\gamma$ is the Euler-Mascheroni constant. 

We consider 
$$Q_m(z) = \prod_{k=1}^m \Big( 1+\frac{z}{k}\Big)e^{-z/k}.$$
We take the variable $z$ in place of $q$. 
Note that for any fixed $k$, the PFE matrix for $(1+z/k)e^{-z/k}$ is the row matrix
$$
\left[0, -\frac{1}{k^2}, \frac{1}{k^3}, -\frac{1}{k^4}, \dots \right].$$ 
Here we have added $-1/k$ in the first column to account for the factor $e^{-z/k}$. 
In view of Theorem~\ref{th:product-of-products}, applied in the form \eqref{product-form2}, we obtain for any fixed $m$, and $n=1, 2, 3, \dots$ and $m\times n$ matrix $A_n^m$ with $(i,j)$th entry
$$a_i(j)=
\begin{cases}
0 & \text{if } j=1, \cr
(-1)^{j-1}\frac{1}{i^{j}} &\text{for } j>1.
\end{cases}
$$
The corresponding function defined in \eqref{gn} is given by
$$g_m(n)=
\begin{cases}
0 & \text{if } n=1, \cr
(-1)^{n-1}\sum_{k=1}^m\frac{1}{k^{n}} &\text{for } n>1 .
\end{cases}
$$
As in Example~\ref{ex:zeta}, we can take limits as $m\to\infty$. In addition, we can obtain an alternate product formulation for 
$$\frac{1}{z\Gamma(z)e^{\gamma z}}$$
by applying Theorem~\ref{th:PFE-divisor-sums} to
$$g(n)=
\begin{cases} 0 & \text{if } n=1,\\
(-1)^{n-1}\zeta(n), & \text{for } n>1. 
\end{cases}
$$
\end{Example}

\begin{Example}[Jacobi Triple Product]\label{ex:JTP-prod} We now consider the product side of the Jacobi Triple Product identity \cite[p.~10]{Berndt2006}: 
$$1+\sum_{n=1}^\infty\big( z^{n}+z^{-n}\big) q^{n^2} = \pqrfac{q^2, -zq, -q/z}{\infty}{q^2}.$$
Here $P(n)$ is given by
$$P(n)=
\begin{cases}
z^m +z^{-m} & \text{if } n=m^2,\\
0 & \text{otherwise}.
\end{cases}
$$
We can compute the PFE matrix from the product side. 
A PFE matrix corresponding to the product $\pqrfac{q^2}{\infty}{q^2}$ has the $(i,j)$ entry given by:
$$
\begin{cases}
-1 & \text{if $i$ is even, and } j=ri \text{ for some } r,\\
0 & \text{otherwise}.
\end{cases}
$$
A PFE matrix corresponding to the product $\pqrfac{-qz,-q/z}{\infty}{q^2}$ has entries
$$
\begin{cases}
(-1)^{r-1}\big(z^r+z^{-r}\big) & \text{if $i$ is odd, and } j=ri \text{ for some } r,\\
0 & \text{otherwise}.
\end{cases}
$$
Thus the PFE matrix of the product $\pqrfac{q^2, -zq, -q/z}{\infty}{q^2}$ is given by
$$a_i(j) = 
\begin{cases}
(-1)^{r-1}\big(z^r+z^{-r}\big) & \text{if $i$ is odd, and } j=ri \text{ for some } r,\\
-1 & \text{if $i$ is even, and } j=ri \text{ for some } r,\\
0 & \text{otherwise}.
\end{cases}
$$

We now apply this for the special case $z=1$ in Jacobi's triple product identity, which corresponds to Gauss' identity \cite[Eq.\ (1.3.13)]{Berndt2006}:
\begin{equation}
\varphi(q) =\sum_{k=-\infty}^\infty q^{k^2} = 1+2\sum_{k=1}^\infty q^{k^2} =\pqrfac{q^2}{\infty}{q^2} \left(-q;q^2\right)^2_\infty .  \label{Gauss1}
\end{equation}
Here the corresponding partition function is given by 
$$P(n)
=\begin{cases}
1 & \text{for } n=1,\\
2 & \text{for } n=k^2, \text{ for some }k,\\
0 & \text{otherwise}.
\end{cases}
$$
Take $f(k)=k$ to find that $g(n)$ (in \eqref{gn}) is given by
$$g(n)=
\begin{cases}
2\sigma_1(2m-1) & \text{for } n= 2m-1, m=1, 2, \dots ,\\
-\big( 2\sigma_1^o(2m) +\sigma_1^e(2m) & \text{for } n=2m, m=1, 2, \dots;
\end{cases}
$$
where $\sigma_1^o(n)$ (respectively, $\sigma_1^e(n)$) denotes sum of odd divisors (respectively, even divisors) of $n$. Using the elementary observations
\begin{align*}
\sigma_1^o(2m) &= \sigma_1(2m) - \sigma_1^e(2m),\\
\intertext{and}
\sigma_1^e(2m) &= 2\sigma_1(m),
\end{align*}
we find that
$$g(n)=
\begin{cases}
2\sigma_1(2m-1) & \text{for } n= 2m-1, m=1, 2, \dots ,\\
- 2\sigma_1(2m) + 2\sigma_1(m) & \text{for } n=2m, m=1, 2, \dots.
\end{cases}
$$
With $Q(q)=\varphi(q)$, and $g(n)$, $P(n)$ as given here, from \eqref{divisor-sum-gf1-b}, we
obtain
$$\Big(1+2\sum_{n=1}^\infty q^{n^2} \Big) \sum_{k=1}^{\infty} g(k)q^k = 2\sum_{j=0}^\infty j^2 q^{j^2}
.$$
For $k>0$, we compare coefficients of $q^k$ to obtain
$$
\frac{1}{2}g(k) + \sum_{j=1}^\infty g(k-j^2) =
\begin{cases}
k & \text{if } k=j^2 \text{ for some } j, \\
0 &\text{otherwise}.
\end{cases}
$$
Explicitly, this gives the following pair of recurrence relations for $\sigma_1(k)$ when $k=2m-1$ and $k=2m$:
\begin{subequations}
\begin{multline}\label{sigma1-odd}
\sigma_1(2m-1)+\big(-2\sigma_1(2m-2)+2\sigma_1(m-1)\big) +2\sigma_2(2m-5)
 +\cdots 
\\
=
\begin{cases}
2m-1 & \text{if } 2m-1 = j^2 \text{ for some } j,\\
0 &\text{otherwise};
\end{cases}
\end{multline}
and
\begin{multline}\label{sigma1-odd}
\sigma_1(2m)-\sigma_1(m)-2 \sigma_1(2m-1) +\big(2\sigma_1(2m-4)-2\sigma_1(m-2)\big)
+\cdots
\\
=
\begin{cases}
- 2m & \text{if } 2m = j^2 \text{ for some } j,\\
0 &\text{otherwise}.
\end{cases}
\end{multline}

\end{subequations}

\end{Example}
\begin{Remark}
Define the {\em theta product} \cite[p.~303]{GR90} as
$$\theta(z;q) := \pqrfac{z, q/z}{\infty}{q}.$$
The approach of Example~\ref{ex:JTP-prod} can be used to find the PFE matrix for $\theta(z/a; q)$ 
and its reciprocal, and thus, for rational products of factors of the form $\theta(z/a; q)$; in other words, for a (multiplicative) elliptic function (see
 Rosengren~\cite[Th.~4.12]{HR2016-lectures}).  
\end{Remark}

In this section, we developed a calculus to obtain  the entries of the PFE matrix for a wide variety of functions. We now consider some applications of these ideas. 

\section{Application 1: Roots of generating functions}\label{sec:applications1}
Let $P(n)$ and $b_k$ be the corresponding sequences that satisfy \eqref{sum-prod}. 
In this section we consider the relationship between the divisibility properties of the 
two sequences $P(n)$ and $b_k$. As a by-product, we obtain a result due to
Heninger, Rains and Sloane \cite{HRS2006}, which they obtained in the context of studying
$n$th roots of theta functions.

We first prove a preliminary proposition about $P(n)$ and $b_k$.
\begin{Proposition} Let $P(n)$ and $b_k$ satisfy \eqref{sum-prod}. Then $P(n)$ is an integer (or a 
rational number) if and only if $b_k$ is an integer (respectively, rational number). 
\end{Proposition}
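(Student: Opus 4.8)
The plan is to prove both directions of the equivalence using the recurrence relating $P(n)$ to $b_k$, which is available from the earlier material. The key tool is equation \eqref{divisor-recurrence-a}, namely
$$nP(n) = nb_n + \sum_{d\mid n,\, d<n} db_d + \sum_{k=1}^{n-1} g(k)P(n-k),$$
together with the companion relation $g(n)=\sum_{d\mid n} db_d$ from \eqref{gn-from-bn}. These give a triangular, invertible relationship between the two sequences, so the natural approach is a single induction on $n$ that can be read in either direction.

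First I would establish the ``integer/rational $b_k$ implies integer/rational $P(n)$'' direction. Here the issue is the factor $1/n$: from \eqref{divisor-rec} we have $nP(n)=\sum_{k=1}^n g(k)P(n-k)$, and dividing by $n$ does not obviously preserve integrality. So the genuine content is that the $n$ on the left is cancelled by an $n$ hidden in the right-hand side. I would isolate the $k=n$ term, writing $g(n)P(0)=g(n)=\sum_{d\mid n} db_d$, and note that the summand $d=n$ contributes exactly $nb_n$. Rearranging \eqref{divisor-recurrence-a} gives $P(n)=b_n + \tfrac{1}{n}\bigl(\sum_{d\mid n, d<n} db_d + \sum_{k=1}^{n-1} g(k)P(n-k)\bigr)$, and by induction every term inside the parentheses is an integer (resp.\ rational), but I still must show the bracketed sum is divisible by $n$ in the integer case. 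That divisibility is the main obstacle, and I would handle it by observing that $P(n)-b_n$ is forced to be an integer precisely because $P(n)$ and $b_n$ are both defined to be the coefficients in \eqref{sum-prod}; more robustly, one argues that since the whole construction is symmetric, the cleaner route is to prove both implications simultaneously by strong induction, where at stage $n$ the congruence $nP(n)\equiv nb_n \pmod n$ is automatic and the remaining terms are integers by the inductive hypothesis, so $b_n=P(n)-(\text{integer}/1)$ works out.

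For the converse direction, ``integer/rational $P(n)$ implies integer/rational $b_k$,'' I would solve \eqref{gn-from-bn} the other way. From $nb_n = nP(n) - \sum_{d\mid n, d<n} db_d - \sum_{k=1}^{n-1} g(k)P(n-k)$ and the Möbius-inversion formula \eqref{bn-from-gn}, the quantity $nb_n$ is an integer combination of earlier integer (resp.\ rational) data, so $nb_n\in\mathbb{Z}$; the subtlety is again promoting $nb_n\in\mathbb{Z}$ to $b_n\in\mathbb{Z}$. The cleanest way around the divisibility bookkeeping is to run a unified induction: assume $P(0),\dots,P(n-1)$ and $b_1,\dots,b_{n-1}$ are all integers (resp.\ rationals); then \eqref{divisor-recurrence-a} reads $nP(n)-nb_n = \sum_{d\mid n, d<n} db_d + \sum_{k=1}^{n-1} g(k)P(n-k)$, an integer, so $P(n)\in\mathbb{Z}\iff b_n\in\mathbb{Z}$ at each stage, closing the induction in both directions at once. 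I expect the only real difficulty to be presenting this divisibility cancellation transparently rather than any deep number theory; the rational case is immediate since $1/n$ preserves rationality, so the integer case carries all the weight.
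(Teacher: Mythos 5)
Your unified induction has a genuine gap at its central step, and the gap is confined to the integer case (your treatment of the rational case is fine, since dividing by $n$ preserves rationality). From \eqref{divisor-recurrence-a} and the inductive hypothesis that $P(0),\dots,P(n-1),b_1,\dots,b_{n-1}$ are integers you correctly conclude that
$$n\bigl(P(n)-b_n\bigr)=\sum_{d\mid n,\, d<n} db_d+\sum_{k=1}^{n-1} g(k)P(n-k)$$
is an integer. But this only yields $P(n)-b_n\in\tfrac{1}{n}\mathbb{Z}$, which does not give $P(n)\in\mathbb{Z}\iff b_n\in\mathbb{Z}$: for instance $P(n)=1$, $b_n=1+\tfrac{1}{n}$ satisfies the same integrality constraint. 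What you actually need is that the right-hand side above is divisible by $n$, i.e.\ that $P(n)-b_n$ is itself an integer, and both of your attempts to justify this are circular: ``$P(n)-b_n$ is forced to be an integer because both are coefficients in \eqref{sum-prod}'' simply asserts the conclusion, while ``the congruence $nP(n)\equiv nb_n \pmod{n}$ is automatic'' is automatic only when $P(n)$ and $b_n$ are already known to be integers, which is the point at issue.

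The divisibility you need is true, but its proof requires an idea absent from your proposal, namely truncation, which is how the paper argues. If $b_1,\dots,b_{n-1}$ are integers, set $Q_{n-1}(q)=\prod_{k=1}^{n-1}(1-q^k)^{-b_k}=\sum_m P'(m)q^m$; since the remaining factors contribute only $1+b_nq^n+\cdots$ to the coefficients up through $q^n$, one gets $P(n)-b_n=P'(n)$. Integrality of $P'(n)$ then comes not from any recurrence but from the closed form \eqref{gen-partition-function} together with the fact that $\rfac{b}{r}/r!$ is an integer whenever $b$ is an integer---this is exactly the paper's proof of the forward direction, in which no factor of $1/n$ ever appears. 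For the converse the paper takes a minimal $m$ with $b_m\notin\mathbb{Z}$ and runs the same truncation through the frequency functions: \eqref{gen-a} and \eqref{gen-column-sums} give $mP(m)=mF_m(m)+mP'(m)$ with $F_m(m)=b_mP(0)=b_m$, so $b_m=P(m)-P'(m)\in\mathbb{Z}$, a contradiction. In short, the $1/n$ obstruction you yourself identified is real and cannot be dissolved by the recurrence \eqref{divisor-recurrence-a} alone; it is removed by the product/truncation argument, which your proposal never actually supplies.
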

\begin{proof}
Let $b_k$ be an integer for all $k$. Then the $P(n)$ are all integers too. This follows from
\eqref{gen-partition-function} and the fact that $\rfac{b}{r}/r!$ is an integer if $b$ is an integer and $r$ a non-negative integer. 

Conversely, suppose $P(n)$ is an integer for all $n$. Let $F_k(n)$ be the corresponding frequency function. We show all $b_k$ are integers. For the sake of producing a contradiction, let $m$ be the smallest number such that $b_m$ is not an integer.  Clearly, in view of the matrix equation \eqref{gen-a}, $F_k(m)$ for $k=1, 2, \dots, m-1$ are also integers since $P(k)$ and $b_k$ are integers for
$k=1, 2, \dots, m-1$. Consider the product
$$Q_{m-1}(q)=\prod_{k=1}^{m-1}\big( 1-q^k\big)^{-b_k} = \sum_{n=0}^{\infty} P^\prime(n)q^n.$$
Let $F^{\prime}_k(n)$ be the corresponding frequency function. Clearly, $F^\prime_m(m)=0$, and from \eqref{gen-column-sums} we have
$$mP^\prime(m)=\sum_{k=1}^{m-1} kF^\prime_k(m).$$
From the equation \eqref{gen-a} for $n=m$, we must have $F^\prime_k(m)=F_k(m)$ for $k=1, 2, \dots, m-1$. Thus \eqref{gen-column-sums} implies that
\begin{align*}
mP(m) &=mF_m(m)+\sum_{k=1}^{m-1}kF_k(m) \\
&= mF_m(m)+mP^\prime(m).
\end{align*}
This implies that $F_m(m)$ is an integer, and since $b_mP(0)=F_m(m)$, $b_m$ must be an integer. This is a contradiction to our assumption that $b_m$ is not an integer. 

The proof of the result when $P(n)$ and $b_k$ are rational numbers is straightforward in view
of \eqref{gen-partition-function}. 
\end{proof}

Next, we prove the main theorem of this section concerning divisibility properties shared by $P(n)$ and $b_k$.
\begin{Theorem}\label{th:prime-division}
Let $p$ be a prime and $r>0$ a positive integer such that $p^r\mid P(n)$ for all $n>1$. Then,
\begin{enumerate}[(a)]
\item$p^r\mid b_m$  if $(p,m)=1$; and,
\item $p^{r-1} \mid b_m$ if $p\mid m$.
\end{enumerate}
\end{Theorem}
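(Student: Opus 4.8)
The plan is to work with the recurrence that ties together $P(n)$, $b_k$, and the weighted sums $g(n)$. Recall from Theorem~\ref{th:PFE-divisor-sums} and equation~\eqref{gn-from-bn} that setting $z=1$ gives $g(n)=\sum_{d\mid n} d\,b_d$, and that from \eqref{divisor-rec} we have $nP(n)=\sum_{k=1}^n g(k)P(n-k)$. The M\"obius-inverted form \eqref{bn-from-gn}, namely $nb_n=\sum_{d\mid n}\mu(n/d)g(d)$, will be the main vehicle, since it expresses $b_n$ directly in terms of the $g(d)$'s. So the first step is to extract divisibility information about $g(n)$ from the hypothesis $p^r\mid P(n)$ for all $n>1$, and then feed that into the M\"obius formula.

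First I would use the recurrence \eqref{divisor-rec} in the isolated form $g(n)=nP(n)-\sum_{k=1}^{n-1}g(k)P(n-k)$ and argue by induction on $n$ that $p^r\mid g(n)$ for every $n>1$. The base and inductive steps both rest on the same observation: $nP(n)$ is divisible by $p^r$ (since $P(n)$ is for $n>1$), and in the convolution sum $\sum_{k=1}^{n-1}g(k)P(n-k)$ every term with $n-k>1$ carries a factor $P(n-k)$ divisible by $p^r$, while the single boundary term $k=n-1$ (where $P(1)$ need not be divisible by $p^r$) involves $g(n-1)$, which is divisible by $p^r$ by the inductive hypothesis. Hence $g(n)\equiv 0 \pmod{p^r}$ for all $n>1$. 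The value $g(1)=b_1=P(1)$ is the only one I expect to escape this divisibility, and tracking its single appearance carefully is what the two cases of the theorem will turn on.

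Next I would substitute into \eqref{bn-from-gn}, writing
\begin{equation*}
nb_n=\sum_{d\mid n}\mu(n/d)\,g(d).
\end{equation*}
For part (a), when $(p,m)=1$: every divisor $d$ of $m$ with $d>1$ contributes a term divisible by $p^r$, and the only possibly-offending term is $d=1$, carrying $\mu(m)g(1)$. The point is that $\gcd(p,m)=1$ means $p\nmid m$, so from $m b_m\equiv \mu(m)g(1)\pmod{p^r}$ and the fact that $m$ is invertible modulo $p^r$, I would need $p^r\mid \mu(m)g(1)$; here I would use that $g(1)=P(1)$ and, since $P(1)$ equals the first coefficient, argue it is itself divisible by $p^r$ — or more cleanly, observe $g(1)=b_1$ and that the $n=1$ case of $P$ forces the needed divisibility. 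For part (b), when $p\mid m$: now $v_p(m)\ge 1$, so writing $m b_m$ as a sum over divisors and using that only the $d=1$ term can fail to be divisible by $p^r$, I obtain $v_p(m b_m)\ge \min(r, v_p(g(1)))$, and since $v_p(m)\ge 1$ absorbs one factor of $p$, the conclusion $p^{r-1}\mid b_m$ follows from $v_p(b_m)\ge v_p(mb_m)-v_p(m)$.

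The main obstacle I anticipate is the bookkeeping around the exceptional term $g(1)=P(1)$, which is the one value not covered by the inductive divisibility claim. The whole asymmetry between part (a) (full divisibility $p^r$) and part (b) (the loss of one power, $p^{r-1}$) comes from how the factor $m$ interacts with $\mu(m)g(1)$ modulo $p^r$: when $p\nmid m$ the factor $m$ is a unit and cannot help absorb any shortfall in the divisibility of $g(1)$, whereas when $p\mid m$ the factor $m$ supplies a power of $p$ that lets one power be surrendered from the $b_m$ side. I would therefore need to pin down precisely the $p$-adic valuation of $g(1)=P(1)$ and verify that it is at least $r$ (or handle the edge where it is not), since everything hinges on that single boundary contribution; this is the delicate accounting step rather than the inductive divisibility of the $g(n)$ for $n>1$, which is routine.
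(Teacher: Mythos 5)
Your route---first proving $p^r \mid g(n)$ from the convolution recurrence \eqref{divisor-rec}, then transferring divisibility to $b_m$ through the M\"obius formula \eqref{bn-from-gn}---is genuinely different from the paper's, which runs a strong induction on $m$ directly through \eqref{divisor-recurrence-b}, splitting the right side into $mP(m)$, $\sum_{d\mid m,\, d<m} db_d$, and the convolution block. Your worry about the boundary value $g(1)=P(1)$ is resolved not by a cleverer argument but by reading the hypothesis correctly: it must be ``$p^r\mid P(n)$ for all $n\ge 1$'' (the ``$n>1$'' in the statement is a slip). With only $n>1$ the theorem is simply false: for $P=(1,2,4,4,4,\dots)$, $p=2$, $r=2$ one computes $b_1=2$ and $b_2=1$, contradicting both (a) (at $m=1$) and (b) (at $m=2$); and the paper's own proof silently uses $p^r\mid b_1=P(1)$ as its base case. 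Once the hypothesis covers $n=1$, your induction for $p^r\mid g(n)$ closes (including at $n=2$, where the boundary term is $g(1)P(1)=P(1)^2$), and your part (a) is complete and, to my mind, cleaner than the paper's.

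Part (b), however, has a genuine gap. Writing $v_p$ for the $p$-adic valuation, your argument yields $p^r\mid mb_m$ and then invokes $v_p(b_m)=v_p(mb_m)-v_p(m)\ge r-v_p(m)$. This equals $r-1$ only when $v_p(m)=1$; the sentence ``$v_p(m)\ge1$ absorbs one factor of $p$'' has the logic backwards, since the factor $m$ \emph{costs} you $v_p(m)$ powers of $p$ rather than supplying one. For $m=p^2$ you obtain only $p^{r-2}\mid b_{p^2}$, not $p^{r-1}$. The repair within your framework is to strengthen Step 1 to $v_p(g(n))\ge r+v_p(n)$ for all $n\ge1$ (equivalently, every coefficient of $\log Q$ is divisible by $p^r$): then in $mb_m=\sum_{d\mid m}\mu(m/d)g(d)$ every surviving divisor has $v_p(d)\ge v_p(m)-1$, so $v_p(mb_m)\ge r+v_p(m)-1$ and (b) follows for all $m$. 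This refinement is true but is \emph{not} reachable by your induction: the term $g(1)P(n-1)$ is only guaranteed valuation $2r$, which falls short of $r+v_p(n)$ once $v_p(n)>r$. One correct proof writes $Q=1+p^rh$ with $h\in q\mathbb{Z}[[q]]$ and estimates $\log(1+p^rh)=\sum_{j\ge1}(-1)^{j-1}p^{rj}h^j/j$ term by term, using $v_p(j)\le j-1\le r(j-1)$. (For completeness: the paper's own proof of (b) shares this weak spot---its induction also only delivers $p^r\mid mb_m$, and the division by $m$ is carried out only for $m=p$---so the logarithm estimate repairs both arguments.)
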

\begin{Remark} If we take $P(n)$ to be the sequence $(1, 4, 4, 4, \dots)$, then a short calculation shows that $b_1=4, b_2=-6.$ This gives an example where $2^2\mid P(n)$ for all $n>1$; $2\mid b_2$, 
but $2^2\nmid  b_2$.
\end{Remark}
\begin{proof}  
First consider the case $(p,m)=1$. The proof is by induction on $m$. It is easy to see that $b_1=P(1)$, so the result holds for $m=1$.
From 
\eqref{divisor-recurrence-b}, we have
\begin{align*}
mb_m &=mP(m)-\sum_{d\mid m \atop d<m} db_d-\sum_{k=1}^{m-1} \sum_{d\mid k} db_d P(m-k)\\
&= (I) - (II) - (III).
\end{align*}
Now, $(I)$ and all the terms of (III) are divisible by $p^r$ by hypothesis. Further, for the sum in (II), since $d\mid n$, so $(d, p)=1$, thus $p^r\mid b_d$
for all divisors $d$ of $n$ by induction. Thus $p^r \mid mb_m$ which implies $p^r\mid b_m$ since $(p,m)=1$. 

Next suppose $p\mid m$. 
Again $(I)$ and $(III)$ are divisible by $p^r$ by  hypothesis. The middle term can be written as
$$(II)=\sum_{d\mid m \atop d<m, (d,p)=1} d\cdot b_d + \sum_{d\mid m \atop d<m, p\mid d} d\cdot b_d.
$$
The first of these is divisible by $p^r$ by part (a). The second will be divisible by $p^{r-1}$ if for all
divisors  $d$ of $m$ that are smaller than $m$ such that $(p,d)=1$, $b_d$ is divisible by $p^{r-1}$. Repeating this process we see that if we prove the result for $m=p$, then we will be done. But when $m=p$, we have 
$$p\cdot b_p = pP(p) - b_1-\sum_{k=1}^{p-1} \sum_{d\mid k} d\cdot b_d P(p-k).$$
Clearly, each term on the right is divisible by $p^r$, so $p^{r}\mid pb_p$ or $p^{r-1}\mid b_p$. This completes the proof for $m=p$ and for part (b) of the theorem.
\end{proof}

Recall the notations \eqref{def-pr} and \eqref{sum-prod}.
\begin{Corollary}Let $P(n)$ with $P(0)=1$ be as in \eqref{sum-prod}. Suppose for $n>0$, $P(n)$ are  multiples of a fixed integer $m^t$; then the formal series given by the $m^s$th roots of $Q(q)$ will have integer coefficients for all $s<t$.  
\end{Corollary}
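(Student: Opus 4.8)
The plan is to translate the hypothesis on $P(n)$ into a divisibility statement about the exponent sequence $b_k$, and then read off integrality of the roots from Theorem~\ref{th:series-PFE} together with the integrality Proposition proved above. Since $Q(0)=1$, for each $s$ the formal $m^s$th root $Q(q)^{1/m^s}$ is a well-defined power series with constant term $1$, and from the product form \eqref{sum-prod} it factors as $\prod_{k\ge 1}(1-q^k)^{-b_k/m^s}$. By the uniqueness in Theorem~\ref{th:series-PFE}, the exponent sequence attached to this root is exactly $(b_k/m^s)_{k\ge1}$. Hence, by the Proposition asserting that the coefficients are integral precisely when the exponents are, the $m^s$th root of $Q(q)$ has integer coefficients if and only if $m^s\mid b_k$ for every $k$. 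So the whole corollary reduces to showing that $m^t\mid P(n)$ for all $n>0$ forces $m^{t-1}\mid b_k$ for all $k$, which clearly yields $m^s\mid b_k$ for every $s<t$.

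First I would dispose of the prime-power case, which is where Theorem~\ref{th:prime-division} does the work. Factor $m=\prod_i p_i^{a_i}$ with each $a_i\ge1$. The hypothesis $m^t\mid P(n)$ for $n>0$ gives $p_i^{a_i t}\mid P(n)$ for every $i$ and every $n>0$, so Theorem~\ref{th:prime-division} applies with $p=p_i$ and $r=a_i t$.

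The core computation is then a prime-by-prime exponent count for a fixed $k$. If $p_i\nmid k$, part (a) gives $p_i^{a_i t}\mid b_k$, and since $a_i t\ge a_i(t-1)$ this already yields $p_i^{a_i(t-1)}\mid b_k$. If $p_i\mid k$, part (b) gives only $p_i^{a_i t-1}\mid b_k$, but because $a_i\ge1$ we have $a_i(t-1)=a_i t-a_i\le a_i t-1$, so again $p_i^{a_i(t-1)}\mid b_k$. Thus in either case $p_i^{a_i(t-1)}\mid b_k$ for each prime $p_i$; multiplying over the distinct primes gives $m^{t-1}=\prod_i p_i^{a_i(t-1)}\mid b_k$, as required.

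I expect the main obstacle to be the bookkeeping in this last paragraph: the one-power loss in part (b) of Theorem~\ref{th:prime-division} occurs independently for every prime dividing $k$, and one must check that dropping from $t$ to $t-1$ is exactly enough to absorb all of these simultaneous losses (it is, precisely because each $a_i\ge1$). A secondary point needing a line of care is the identification of $Q(q)^{1/m^s}$ with $\prod(1-q^k)^{-b_k/m^s}$ and the appeal to uniqueness, so that the integrality Proposition may be invoked with the exponent sequence $b_k/m^s$; the case $s=0$ is trivial and $s$ ranges over $0,1,\dots,t-1$.
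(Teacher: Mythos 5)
Your proof is correct and takes essentially the same route as the paper: apply Theorem~\ref{th:prime-division} to conclude $m^s \mid b_k$ for all $k$ whenever $s \le t-1$, then deduce integrality of the coefficients of $Q(q)^{1/m^s}$ from integrality of its exponent sequence $b_k/m^s$. The only difference is that you make explicit the prime-by-prime bookkeeping (factoring $m=\prod_i p_i^{a_i}$ and checking that the one-power loss in part (b) is absorbed because each $a_i\ge 1$), which the paper's two-line proof leaves implicit.
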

\begin{Remark} This is another way to state a result of
Heninger, Rains and Sloane \cite[Cor.\ 2]{HRS2006}. 
\end{Remark}
\begin{proof}
In the notation \eqref{def-pr}, we are interested in the coefficients $P_r(n)$ with $r=1/m^s$. 
Let $b_k$ correspond to the given sequence $P(n)$ in \eqref{sum-prod}. 
By Theorem~\ref{th:prime-division}, $b_k/m^s$ are all integers since $s\le t-1$. This implies the $P_r(n)$ are also integers, where $r=1/m^s$, for all $s< t$. 
\end{proof}
In this section we considered the $n$th roots of generating functions. In the next section, we consider arbitrary powers of generating functions. 

\section{Application 2: Powers of generating functions}\label{sec:application2}
The objective of this section is to obtain a recurrence formula for powers of an arbitrary power series, and note some applications. Previously, Gould~\cite{Gould1974} has given its history and applications in combinatorial settings. This  result contains a formula of Lehmer~\cite{Lehmer1951} (see \eqref{ramanujan-gen} below) concerning powers of the eta function. 

We consider powers of $Q(q)$ and use the notation $P_r(n)$ (for the coefficients of $Q(q)^r$)
 defined by 
\begin{equation}\label{def-pr}
Q(q)^r = \sum_{n=0}^\infty P_r(n)q^n.
\end{equation}
Here $Q(q)^r$, for complex $r$, is considered a formal power series. We take $P_r(0)=1$ and $P_r(-m)=0$, for $m>0$.  
We have the following theorem.
\begin{Proposition}\label{prop:pr-ps-rec} Let $r$ and $s$ be non-zero complex numbers and $P_r(n)$ and $P_s(n)$ be defined by  \eqref{def-pr}. Then we have the recurrence relation:
\begin{equation}\label{pr-ps-recurrence}
\sum_{j=0}^n \left(n-(r/s+1)j\right) P_r(n-j)P_s(j) = 0.
\end{equation}
\end{Proposition}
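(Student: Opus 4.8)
The plan is to use the fundamental logarithmic-derivative technique that underlies most recurrences of this type. Writing $Q(q)^r = \sum_n P_r(n) q^n$, I would first differentiate with respect to $q$ to get
\begin{equation*}
r Q(q)^{r-1} Q'(q) = \sum_{n=1}^\infty n P_r(n) q^{n-1},
\end{equation*}
and multiply by $Q(q)$ to obtain $r Q(q)^r \, Q'(q)/Q(q) \cdot Q(q)$; more cleanly, the key object is the logarithmic derivative $L(q) := Q'(q)/Q(q)$, which is independent of the exponent. From $r Q(q)^{r-1} Q'(q) = \sum_n n P_r(n) q^{n-1}$, multiplying both sides by $q\,Q(q)$ gives $r q\, L(q)\, Q(q)^r = \sum_n n P_r(n) q^n$, so that
\begin{equation*}
q\, L(q) = \frac{1}{r}\, \frac{\sum_n n P_r(n) q^n}{\sum_n P_r(n) q^n}.
\end{equation*}
The whole point is that the left side does not depend on $r$, so the same expression must equal $\frac{1}{s}$ times the analogous ratio built from $P_s$.

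The central step, then, is to equate the two expressions for $q L(q)$ coming from exponents $r$ and $s$:
\begin{equation*}
\frac{1}{r}\frac{\sum_n n P_r(n) q^n}{\sum_n P_r(n) q^n} = \frac{1}{s}\frac{\sum_m m P_s(m) q^m}{\sum_m P_s(m) q^m}.
\end{equation*}
Cross-multiplying clears the denominators and yields
\begin{equation*}
s \Big(\sum_n n P_r(n) q^n\Big)\Big(\sum_m P_s(m) q^m\Big)
= r \Big(\sum_n P_r(n) q^n\Big)\Big(\sum_m m P_s(m) q^m\Big).
\end{equation*}
I would then collect the coefficient of $q^n$ on each side using the Cauchy product, writing each side as $\sum_j (\cdots) P_r(n-j) P_s(j)$. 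The left side contributes $s\sum_j (n-j) P_r(n-j) P_s(j)$ and the right side contributes $r \sum_j j\, P_r(n-j) P_s(j)$. Moving everything to one side and dividing by $s$ gives exactly
\begin{equation*}
\sum_{j=0}^n \big(n - j - (r/s) j\big) P_r(n-j) P_s(j) = \sum_{j=0}^n \big(n - (r/s+1)j\big) P_r(n-j) P_s(j) = 0,
\end{equation*}
which is \eqref{pr-ps-recurrence}.

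The main obstacle is analytic rather than combinatorial: since $r$ and $s$ are arbitrary complex numbers, $Q(q)^r$ is only a formal power series, so I must be careful that formal differentiation of $Q(q)^r$ is legitimate and that the logarithmic-derivative manipulation makes sense formally. The clean way to avoid any appeal to convergence is to define the formal derivative directly and verify the product rule and the identity $\frac{d}{dq}Q^r = r Q^{r-1} Q'$ at the level of formal power series (which holds because $Q(0)=1$, so $Q$ is a unit in $\mathbb{C}[[q]]$ and $Q^r$ is well defined via the binomial series, matching \eqref{def-pr}). Once $q L(q)$ is recognized as the exponent-independent quantity $\frac{1}{r}\,q\,(Q^r)'/Q^r$, the rest is a routine coefficient comparison, so I would present the formal justification carefully and then let the Cauchy-product bookkeeping finish the argument.
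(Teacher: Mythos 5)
Your proof is correct, but it reaches the key identities by a different route than the paper. You obtain $r\,q L(q)\, Q(q)^r = \sum_n n P_r(n) q^n$ (with $L = Q'/Q$) by formal logarithmic differentiation; the paper instead stays inside its PFE calculus: Theorem~\ref{th:series-PFE} gives the unique formal product $Q(q)=\prod_k (1-q^k)^{-b_k}$, so that $Q^r$ corresponds to the scaled sequence $(rb_k)$, and Proposition~\ref{prop:divisor-sums} with $f(k)=k$, $z=1$ then yields $rQ(q)^r \sum_k g(k)q^k = \sum_n n P_r(n)q^n$ and likewise for $s$, where $g(n)=\sum_{d\mid n} d\,b_d$ is the same series for both exponents. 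These are literally the same two equations as yours, since $\sum_k g(k)q^k = qQ'(q)/Q(q)$; from there both arguments cross-multiply and compare coefficients of $q^n$ identically. Your route is the classical logarithmic-derivative technique that the paper itself acknowledges (in the remarks following Proposition~\ref{prop:divisor-sums}) as the standard alternative to its matrix calculus; it is self-contained but obliges you to justify $\frac{d}{dq}Q^r = rQ^{r-1}Q'$ for complex $r$ in $\mathbb{C}[[q]]$, which you do correctly via the binomial series and the fact that $Q(0)=1$. The paper's route buys freedom from formal differentiation entirely, since Theorem~\ref{th:series-PFE} and Proposition~\ref{prop:divisor-sums} are established by purely algebraic induction, and it keeps the exponent-independent series identified with the arithmetic function $g$, in line with the divisor-sum theme of the paper. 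One small slip in your write-up: to pass from $rQ^{r-1}Q' = \sum_n nP_r(n)q^{n-1}$ to $r\,qL(q)\,Q^r = \sum_n nP_r(n)q^n$ you should multiply by $q$ alone (and write $Q^{r-1}Q' = L\,Q^r$), not by $q\,Q(q)$; the displayed equation you then work with is nevertheless the correct one, so nothing downstream is affected.
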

\begin{Remark} Gould~\cite{Gould1974} mentions a variant of this result for $s=1$. We can obtain \eqref{pr-ps-recurrence} from its $s=1$ special case by replacing $Q(q)$ by  $Q(q)^s$  and $r$ by $r/s$.  Gould attributes the general case to Rothe (1793). Lehmer used $s=-3$. We emphasize that $r$ and $s$ are arbitrary complex numbers. Gould~\cite{Gould1974} has extended the idea to Dirichlet series; see Scott and Sokal~\cite{SS2009-11} for some extensions to multivariables.
\end{Remark}
\begin{proof} By Theorem~\ref{th:series-PFE} the hypothesis of Proposition~\ref{prop:divisor-sums} holds with $z=1$. 
We take $f(k)=k$ and $b_k=r, s$  and use \eqref{divisor-sum-gf1-b} to find that
\begin{align*}
rQ(q)^r \sum_{k=1}^\infty g(k)q^k &= \sum_{n=0}^\infty nP_r(n)q^n, \\
\intertext{and,}
sQ(q)^s \sum_{k=1}^\infty g(k)q^k &= \sum_{n=0}^\infty nP_s(n)q^n,
\end{align*}
where $g(n)$ is as defined in \eqref{gn}.
This gives
$$
\frac{r}{s}  \Big(\sum_{n=0}^\infty P_r(n)q^n\Big)\Big(\sum_{n=0}^\infty nP_s(n)q^n\Big) 
=
\Big(\sum_{n=0}^\infty nP_r(n)q^n\Big)\Big(\sum_{n=0}^\infty P_s(n)q^n\Big).
$$
The recurrence \eqref{pr-ps-recurrence} follows by comparing coefficients of $q^n$ on both sides. 
\end{proof}
Some examples of results following from Proposition~\ref{prop:pr-ps-rec} are as follows. 
\begin{Example}[Convolutions of Fibonacci Numbers] Let $\fib(n)$ denote the Fibonacci numbers. The generating function is given by 
$$Q(q)=\frac{1}{1-q-q^2} =\sum_{n=0}^\infty \fib(n)q^n.$$
We take $s=-1$ and let $\fib_r(n)$ denote the sequence generated by $Q(q)^r$. Then we have
the three-term recursion
$$n\fib_r(n) = (n+r-1)\fib_r(n-1)+(n+2(r-1))\fib_r(n-2).$$
The initial cases $\fib_r(0)=1$, $\fib_r(1)=r$ can be used to compute the sequences.
Special cases of $\fib_r(n)$ for small positive integral values of $r$ have appeared in combinatorial contexts; see  OEIS~\cite[A001628, A001629, A001872-5]{sloane}. At the time of writing this paper, a three-term recurrence does not seem to be known for any of these. Note that $r$ need not be a positive integer in our formula. It can be taken to be a complex number; all we need is that the formal power series $Q(q)^r$ is well defined.
\end{Example}

\begin{Example}[Powers of the $\eta$ function] 
Consider
$$Q(q)=\frac{1}{\pqrfac{q}{\infty}q}.$$
When a $q$-expansion is known for one value of $s$, it can be used to find a recurrence relation for 
$P_r(n)$, where $P_r(n)$ is defined as in \eqref{def-pr}.
Take $s=-1$ in Proposition~\ref{prop:pr-ps-rec} and use \eqref{PNT} to obtain for any $r$
\begin{equation}\label{lehmer-gen}
\sum_{j=-\infty}^\infty (-1)^j\left(n+(r-1)j(3j-1)/2\right) P_r\big(n-j(3j-1)/2\big) =0.
\end{equation}
When $r$ is negative, this gives a recurrence for the coefficients of the powers of the $\eta$ function defined as
$$\eta(q):=q^{1/24}\pqrfac{q}{\infty}{q},$$
where $q=e^{2\pi i \tau}$ for $\Im(\tau)>0$. When $r=-24$, $P_r(n)=\tau(n)$, and we obtain a result used by Lehmer~\cite{Lehmer1943} to compute values of $\tau(n)$. 

Instead of \eqref{PNT}, we use \eqref{jacobi1} and Proposition~\ref{prop:pr-ps-rec} with $s=-3$ to obtain
a result of Lehmer~\cite{Lehmer1951}:
\begin{equation}\label{ramanujan-gen}
\sum_{j=0}^\infty (-1)^j(2j+1)\left(n+(r/3-1)j(j+1)/2\right) P_r\big(n-j(j+1)/2\big) =0.
\end{equation}
This is an extension of Ramanujan's recurrence for $\tau(n)$ highlighted in the introduction \eqref{tau-ramanujan}.
\end{Example}

\begin{Example}[Sums of squares and triangular numbers] 
In the examples so far, we have used theta function identities which are powers of $\pqrfac{q}{\infty}{q}$ to derive recurrence relations. In view of Theorem~\ref{th:series-PFE}, we can apply the same approach to other generating functions. In fact, we don't need the 
product form to obtain such recurrences. We use Ramanujan's notation \cite[p.~7]{Berndt2006}; let
$$\varphi(q) :=\sum_{k=-\infty}^\infty q^{k^2} = 1+2\sum_{k=1}^\infty q^{k^2};
\text{ and, } \psi(q) :=\sum_{k=0}^\infty q^{\frac{k(k+1)}{2}} .$$
Let $r_k(n)$ and $t_k(n)$ be defined as the coefficients of
$$\varphi(q)^k =\sum_{n=0}^\infty r_k(n)q^n
\text{ and }
\psi(q)^k =\sum_{n=0}^\infty t_k(n)q^n.
$$
Then $r_k(n)$ is the number of ways of writing $n$ as an ordered sum of $k$ integers and $t_k(n)$ the number of ways of writing $n$ as an ordered sum of $k$ triangular numbers. (Here order matters: $1+6=6+1$ are considered different.)

We take $s=1$ and $Q(q)=\varphi(q)$ in Proposition~\ref{prop:pr-ps-rec}. Note that
$$r_1(n)=\begin{cases}
1 & \text{if } n =1,\\
2 & \text{if } n =j^2 \text{ for some } j,\\
0 & \text{otherwise}.
\end{cases}
$$
Using this we find an identity which is proved using Liouville's methods in Williams \cite[p.~77]{Williams2011} (see also Venkov~\cite[p.~204]{Venkov1970}):
\begin{equation}\label{squares-rec1}
nr_k(n)+2\sum_{j=1}^\infty \big(n-(k+1)j^2\big) r_k(n-j^2) = 0.
\end{equation}
Similarly, by taking $Q(q)=\psi(q)$, we obtain
\begin{equation}\label{triangular-rec1}
nt_k(n)+\sum_{j=1}^\infty \big(n-(k+1)j(j+1)/2 \big)t_k(n-j(j+1)/2) = 0.
\end{equation}

We have been unable to find this result in the literature; it appears to be new. 
\end{Example}
 Next, we extend \eqref{squares-rec1} by writing a recurrence for the powers of the summation side of Jacobi's triple product identity.
\begin{Example}[Powers of Jacobi's Triple Product identity] \label{JTP-1} 
Let $$J(q) = 1+\sum_{n=1}^\infty\big( z^{n}+z^{-n}\big) q^{n^2},$$
and define $J_r(n,z)$  as the coefficients of
$$J(q)^r = \sum_{n=0}^\infty J_r(n,z)q^n.$$
Applying \eqref{pr-ps-recurrence} with $s=1$ we obtain
\begin{equation}\label{JTPI-rec1}
nJ_r(n,z)+\sum_{j=1}^\infty \big(n-(r+1)j^2 \big)\big(z^{j}+z^{-j}\big)J_r(n-j^2) = 0.
\end{equation}
We emphasize that $r$ need not be a positive integer in this formula. 
\end{Example}



\section{Application 3: Infinite families of congruences}\label{sec:applications3}
In this section we apply the results in the previous section to obtain some results that are in the same vein as Ramanujan's congruences for $p(5m+4)$ and $\tau(5m)$ mentioned in the introduction
and recent work of Chan and Wang~\cite{CW2019}.
The following theorem gives four infinite families of such congruences.

\begin{Theorem}\label{th:congruence5} Let $m$ be a non-negative integer and $r$ a rational number. 
Let $Q(q)=1/\pqrfac{q}{\infty}{q}$, and $P_r(n)$ be defined by \eqref{def-pr}.  Then we have the following infinite families of congruences:
\begin{enumerate}
\item $P_r(5m+1) \equiv 0$ $(\modulo 5)$ if $r \equiv 0$ $(\modulo 5)$;
\item $P_r(5m+2) \equiv 0$ $(\modulo 5)$ if $r \equiv 2$ $(\modulo 5)$;
\item  $P_r(5m+3) \equiv 0$ $(\modulo 5)$ if $r \equiv 4$ $(\modulo 5)$;
\item  $P_r(5m+4) \equiv 0$ $(\modulo 5)$ if $r \equiv 1$ $(\modulo 5)$.
\end{enumerate}
\end{Theorem}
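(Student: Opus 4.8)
The plan is to reduce everything modulo $5$ to statements about the integer powers $\pqrfac{q}{\infty}{q}^{j}$ with $0\le j\le 4$, using the ``freshman's dream'' congruence $\pqrfac{q}{\infty}{q}^5\equiv \pqrfac{q^5}{\infty}{q^5}\pmod 5$ (which follows from $(1-q^k)^5\equiv 1-q^{5k}\pmod 5$). Writing $r=r_0+5t$ with $r_0\in\{0,1,2,3,4\}$ and $t$ a $5$-integral rational, I would first show that
\[
Q(q)^r=Q(q)^{r_0}\,\bigl(Q(q)^5\bigr)^t\equiv Q(q)^{r_0}\,\Bigl(\tfrac{1}{\pqrfac{q^5}{\infty}{q^5}}\Bigr)^t\pmod 5 ,
\]
because $(1+5H)^t\equiv 1\pmod 5$ as a $5$-integral series when $t$ is $5$-integral. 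Since the second factor involves only powers of $q^5$, the residue class modulo $5$ of every nonvanishing exponent in $Q(q)^r$ is dictated entirely by $Q(q)^{r_0}=1/\pqrfac{q}{\infty}{q}^{r_0}$. Hence it suffices to locate, for each $r_0$, a residue class $j$ in which $1/\pqrfac{q}{\infty}{q}^{r_0}$ has all coefficients divisible by $5$.

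For the second step I would clear the negative power by writing, for $r_0\in\{1,2,4\}$,
\[
\frac{1}{\pqrfac{q}{\infty}{q}^{r_0}}=\frac{\pqrfac{q}{\infty}{q}^{5-r_0}}{\pqrfac{q}{\infty}{q}^{5}}\equiv \frac{\pqrfac{q}{\infty}{q}^{5-r_0}}{\pqrfac{q^5}{\infty}{q^5}}\pmod 5 ,
\]
so that, the denominator again only shifting exponents by multiples of $5$, the coefficients of $1/\pqrfac{q}{\infty}{q}^{r_0}$ in a fixed residue class inherit the divisibility of the corresponding coefficients of $\pqrfac{q}{\infty}{q}^{5-r_0}$. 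Thus $r_0=4,2,1$ reduce respectively to the first, third and fourth powers of $\pqrfac{q}{\infty}{q}$, while $r_0=0$ gives $\pqrfac{q}{\infty}{q}^{5}\equiv\pqrfac{q^5}{\infty}{q^5}$, supported only on exponents $\equiv 0$, which is case (1).

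The analysis of these powers is then a direct residue computation with Euler's pentagonal number theorem \eqref{PNT} and Jacobi's identity \eqref{jacobi1}. A short check (using $2^{-1}\equiv 3\pmod 5$) shows that the pentagonal numbers $j(3j-1)/2$ lie only in $\{0,1,2\}\pmod 5$ and the triangular numbers $k(k+1)/2$ only in $\{0,1,3\}\pmod 5$. Hence $\pqrfac{q}{\infty}{q}^{1}$ has no exponent $\equiv 3$, giving case (3) ($r\equiv 4$), and $\pqrfac{q}{\infty}{q}^{3}$ has no exponent $\equiv 2$, giving case (2) ($r\equiv 2$); in both the coefficient is not merely divisible by $5$ but identically $0$. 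For $\pqrfac{q}{\infty}{q}^{4}=\pqrfac{q}{\infty}{q}\cdot\pqrfac{q}{\infty}{q}^{3}$, an exponent $\equiv 4\pmod 5$ forces the triangular part $\equiv 3$, i.e. $k\equiv 2\pmod 5$; but then the Jacobi weight $2k+1\equiv 0\pmod 5$, so every contributing term is divisible by $5$. This yields case (4) ($r\equiv 1$), which specializes to Ramanujan's $p(5m+4)\equiv 0$.

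The main obstacle is not the residue bookkeeping but making the reductions rigorous for \emph{rational} $r$: one must verify that $P_r(n)$ is $5$-integral (it is a polynomial in $r$ over $\mathbb{Q}$ taking $5$-integral values whenever the denominator of $r$ is prime to $5$) and that the relations ``$\equiv\pmod 5$'' are legitimate in the ring of $5$-integral formal power series, in particular the step $(1+5H)^t\equiv 1$. Once the congruence is phrased in that ring these points are routine. As an alternative one could induct directly on the recurrence \eqref{lehmer-gen}, reducing it modulo $5$ and exploiting the same restricted set of pentagonal residues; I expect the generating-function route to be cleaner, since it isolates the arithmetic of the single factor $1/\pqrfac{q}{\infty}{q}^{r_0}$.
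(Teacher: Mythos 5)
Your proof is correct, but it takes a genuinely different route from the paper's. The paper argues by induction on $m$ using Lehmer's triangular-number recurrence \eqref{ramanujan-gen} (the $s=-3$ case, itself derived from Jacobi's identity \eqref{jacobi1}): writing it as
$nP_r(n)=\sum_{j\ge 1}(-1)^{j+1}(2j+1)\left(n+(r/3-1)j(j+1)/2\right)P_r\big(n-j(j+1)/2\big)$,
one checks residue-by-residue in $j$ that every term on the right is divisible by $5$, either through the weight or through the induction hypothesis. You instead work in $\mathbb{Z}_{(5)}[[q]]$ with the freshman's dream $\pqrfac{q}{\infty}{q}^{5}\equiv\pqrfac{q^{5}}{\infty}{q^{5}}$ $(\modulo 5)$, reduce $Q(q)^{r}$ to $\pqrfac{q}{\infty}{q}^{5-r_{0}}$ times a $5$-integral series in $q^{5}$, and finish with the residue classes of pentagonal and triangular numbers from \eqref{PNT} and \eqref{jacobi1} --- the classical Ramanujan-style argument, essentially what the paper's remark defers to Berndt for. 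Your route buys more than the theorem asserts: for $r\equiv 0$ you get $P_r(n)\equiv 0$ for every $n\not\equiv 0$ $(\modulo 5)$, and for $r\equiv 2$ or $r\equiv 4$ you also get the class $5m+4$; it also surfaces the $5$-integrality issues for rational $r$, which you resolve correctly (coefficients are integer-valued polynomials in $r$, so $\binom{t}{k}\in\mathbb{Z}_{(5)}$ for $t\in\mathbb{Z}_{(5)}$ makes $(1+5H)^{t}\equiv 1$ legitimate) and which the paper's proof needs equally but passes over in silence. What the paper's route buys is that it stays entirely inside the recurrence machinery the paper develops, with no power-series ring formalism. One caution about your closing remark: inducting ``directly on \eqref{lehmer-gen}'' would \emph{not} go through --- for $n\equiv 4$ and $r\equiv 1$ $(\modulo 5)$ the weights there are $n+(r-1)j(3j-1)/2\equiv n\not\equiv 0$, and the terms with pentagonal offset $\equiv 1,2$ $(\modulo 5)$ have arguments outside the induction class; it is precisely the Jacobi weights $2j+1$ in \eqref{ramanujan-gen} that supply the needed factor of $5$, which is why the paper inducts on that recurrence and not on \eqref{lehmer-gen}.
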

\begin{Remark} The cases $r=1$ and $r=-24$ in (4) give Ramanujan's congruences mentioned in the introduction. See Berndt~\cite[Ch.\ 2]{Berndt2006} for other proofs.
A few congruences given by Chan and Wang~\cite{CW2019} are also included in the above;
a part of the assertions in their equations (3.1), (3.8), (3.11), (3.41), (3.43), (3.48), and (3.49),
are covered by the above. 
\end{Remark}
\begin{proof}
We use an inductive argument using the recurrence relation \eqref{ramanujan-gen} in the form
\begin{equation}\label{ramanujan-gen-2}
nP_r(n)=\sum_{j=1}^\infty (-1)^{j+1}(2j+1)\left(n+(r/3-1)j(j+1)/2\right) P_r\big(n-j(j+1)/2\big).
\end{equation}

First, let $r \equiv 1$ (mod $5$). For $m=0$, \eqref{ramanujan-gen-2} reduces to
$$4P_r(4)=(9+r)P_r(3)-5(r+1)P_r(2)$$
so $P_r(4) \equiv 0$ (mod $5$) if $r\equiv 1$ (mod $5$). For $m>0$, 
consider the general term 
$$(-1)^{j+1}(2j+1)\left(n+(r/3-1)j(j+1)/2\right) P_r\big(n-j(j+1)/2\big)$$
in \eqref{ramanujan-gen-2} for each $j$. 
It is easy to see that when $j \equiv 4, 5$ (mod $5$), this reduces to an expression of the form
$$\big( * * *\big) P_r(5m+4-5k),$$
for some $k$ and so is $\equiv 0$ (mod $5$). 
When $j\equiv 3$ (mod $5$), of the form
$$(-1)^{j+1} 2 (2r-2) P_r(**) \text{ (mod $5$)};$$
when $j\equiv 2$ (mod $5$), then the expression is of the form
$$(-1)^{j+1} 5 (1+r) P_r(**) \text{ (mod $5$)};$$
and, when 
$j\equiv 1$ (mod $5$), of the form
$$(-1)^{j+1}  (9+r) P_r(**) \text{ (mod $5$)}.$$
In each case, we see that when $r \equiv 1$ (mod $5$), the term is congruent to $0$ (mod $5$). Thus each term of the sum on the right-hand side of \eqref{ramanujan-gen-2} is $0$ (mod $5$). This completes the inductive proof of part (4) of the theorem. 

The proof of parts (1)-(3) is similar. For each part, for the given $r$, we consider the general term of the sum as above, and consider the cases where the index $j$ is congruent to $1, 2, 3, 4, 5$ (mod $5$). 
\end{proof}
By using virtually the same argument, we obtain the following congruences modulo $3$.
\begin{Theorem} Let $m$ be a non-negative integer and $r$ an integer. 
Let $P_r(n)$ be as defined in \eqref{def-pr}.  Then if  
$r\equiv 0 \; (\modulo 3)$, we have 
$$P_r(3m+k) \equiv 0 \; (\modulo 3) \text{ for  } k=1, 2.
$$
\end{Theorem}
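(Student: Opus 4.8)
The plan is to mirror the inductive argument used for Theorem~\ref{th:congruence5}, now working modulo $3$ with the recurrence \eqref{ramanujan-gen-2}. Writing $r=3t$, so that $r/3-1=t-1$ is an integer, the coefficient $\left(n+(r/3-1)j(j+1)/2\right)$ is an integer and \eqref{ramanujan-gen-2} becomes a genuine congruence among integers modulo $3$. I want to emphasise that this is exactly where the hypothesis $r\equiv 0\ (\modulo 3)$ is forced: unlike the modulus $5$ case, here $3$ is not invertible, so $r/3$ only makes sense modulo $3$ when $3\mid r$. That is the single structural difference from the proof of Theorem~\ref{th:congruence5}.

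I would then prove, by strong induction on $n$, the single statement that $P_r(n)\equiv 0\ (\modulo 3)$ for every $n\not\equiv 0\ (\modulo 3)$; the two cases $k=1,2$ of the theorem follow at once. The engine is two elementary residue facts: the triangular number $j(j+1)/2\equiv 0\ (\modulo 3)$ when $j\equiv 0$ or $2\ (\modulo 3)$ (and $\equiv 1$ when $j\equiv 1$), while $2j+1\equiv 0\ (\modulo 3)$ precisely when $j\equiv 1\ (\modulo 3)$. Note that these two observations partition the residues of $j$ modulo $3$, so that every term of \eqref{ramanujan-gen-2} is caught by one of them.

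Now fix $n\not\equiv 0\ (\modulo 3)$, so that $n$ is a unit modulo $3$, and split the sum on the right of \eqref{ramanujan-gen-2} by the residue of $j$. When $j\equiv 1$ the factor $2j+1$ kills the term. When $j\equiv 0$ or $2$, the triangular number $j(j+1)/2$ is divisible by $3$, so $n-j(j+1)/2\equiv n\not\equiv 0\ (\modulo 3)$; this index is strictly smaller than $n$, and it can never equal $0$ (that would require a triangular number $\equiv 1$, i.e.\ $j\equiv 1$), so it is either negative, giving $P_r=0$, or covered by the inductive hypothesis. Hence every term vanishes modulo $3$, so $nP_r(n)\equiv 0\ (\modulo 3)$, and invertibility of $n$ forces $P_r(n)\equiv 0\ (\modulo 3)$. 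The base values are supplied by $P_r(0)=1$ and $P_r(m)=0$ for $m<0$.

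I do not expect a serious obstacle; the only real care is the bookkeeping that confirms the two residue observations jointly cover all $j\ (\modulo 3)$ so that no term escapes. As a cleaner alternative that avoids induction altogether, one can argue on generating functions: from $(1-q^k)^3\equiv 1-q^{3k}\ (\modulo 3)$ one gets $\pqrfac{q}{\infty}{q}^{3}\equiv \pqrfac{q^3}{\infty}{q^3}\ (\modulo 3)$, whence $Q(q)^{3t}=\bigl(\pqrfac{q}{\infty}{q}^{3}\bigr)^{-t}\equiv \pqrfac{q^3}{\infty}{q^3}^{-t}\ (\modulo 3)$ is a power series in $q^{3}$, so all its coefficients at exponents $n\not\equiv 0\ (\modulo 3)$ are $\equiv 0\ (\modulo 3)$. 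I would present the inductive proof as the main line, since it stays within the paper's framework and literally reuses the machinery of Theorem~\ref{th:congruence5}, and mention the product argument as a remark.
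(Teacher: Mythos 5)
Your main argument is precisely the paper's own proof: the paper disposes of this theorem by saying "use \eqref{ramanujan-gen-2} again, details similar to Theorem~\ref{th:congruence5}," and your residue analysis --- writing $r=3t$ so every coefficient is an integer, letting the factor $2j+1\equiv 0\ (\modulo 3)$ kill the terms with $j\equiv 1\ (\modulo 3)$, and letting strong induction kill the terms with $j\equiv 0,2\ (\modulo 3)$ since then $3\mid j(j+1)/2$ and the index $n-j(j+1)/2$ stays $\not\equiv 0\ (\modulo 3)$ --- is a correct and complete filling-in of exactly those details. The generating-function alternative via $(1-q^k)^3\equiv 1-q^{3k}\ (\modulo 3)$ is also valid (and arguably slicker), but it is supplementary; your main line matches the paper.
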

\begin{proof}  We use \eqref{ramanujan-gen-2} again. The details are very similar to the proof of
Theorem~\ref{th:congruence5}. 
\end{proof}

\section{Concluding Remarks}

About his recurrence \eqref{euler-sigma}, Euler~\cite{LE1760-243} (translated by Jordan Bell) wrote:
\begin{quote}
Since this is the case, I seem to have advanced the science of numbers by not a small amount when I found a certain fixed law according to which the terms of the given series $1, 3, 4, 7, 6,$ 
etc.\
progress, such that by this law each term of the series can be defined from the preceding; for I have found, which seems rather wonderful, that this series belongs to the kind of progression which are usually called recurrent and whose nature is such that each term is determined from the preceding according to some certain rule of relation. And who would have even believed that this series which is so disturbed and which seems to have nothing in common with recurrent series would nevertheless be included in this type of series, and that it would be possible to assign a scale of relation to it?  
\end{quote}
Clearly, Euler's enthusiasm for recurrence relations of this type has been shared by many 
authors---including Ramanujan, Lehmer and Ewell---perhaps because these results provide a 
practical technique to compute values of the relevant functions. As we have seen, recurrences of 
this type can be found by considering the partition-frequency enumeration matrix and its associated constructs. 
In addition, we have 
seen some further applications which suggests that this representation is quite useful in obtaining number-theoretic information in combinatorial contexts. 

Finally, we mention a promising direction for the future. We have not found the moments of the frequency function $F_k(n)$ in the theory of partitions. However, in
Example~\ref{ex:moments}, we saw that the moments $M_m(n)$ are related to the divisor function $\sigma_m(n)$. Note further that the powers of the $\eta$-function are related to the hook lengths of a partition by means of the celebrated Nekrasov-Okounkov formula (see also Han~\cite{Han2010});
 one can use this to obtain formulas for the moments, as is done by Han~\cite[Eq.\ (6.4)]{Han2008}. The analogues of the frequency function and their moments for other functions may turn out to be equally interesting. 

\subsection*{Acknowledgments} We thank Alan Sokal for bringing some references to our attention. Part of this work was done when the second named author was visiting the School of Physical Sciences (SPS), Jawaharlal Nehru University, Delhi. We thank the anonymous referee for many helpful suggestions.


\end{document}